\documentclass[11pt]{amsart}

\usepackage{amsmath} 
\usepackage{amssymb} 
\usepackage{amscd} 
\usepackage{tabularx} 
\usepackage[all,cmtip,line]{xy}
\usepackage{enumerate}
\usepackage{hyperref}
\usepackage{bm}
\usepackage{enumitem}
\usepackage{amsthm}
\newtheorem{thm}{\textbf{Theorem}}[section]
\newtheorem{defn}[thm]{\textbf{Definition}}
\newtheorem{prop}[thm]{\textbf{Proposition}}
\newtheorem{lem}[thm]{\textbf{Lemma}}

\newtheorem{rem}[thm]{\textbf{Remark}}

\newtheorem*{thmn}{\textbf{Theorem}}

\def\Q{\mathbb{Q}}
\def\Z{\mathbb{Z}}
\def\C{\mathbb{C}}
\def\A{\mathbb{A}}
\def\R{\mathbb{R}}
\def\Gal{\operatorname{Gal}}

\def\GL{\operatorname{GL}}
\def\SL{\operatorname{SL}}
\def\SO{\operatorname{SO}}

\def\til{\widetilde}

\def\Qbar{\overline{\mathbb{Q}}}
\def\rhobar{\overline{\rho}}
\def\xbar{\overline{x}}
\def\zbar{\overline{z}}

\newcommand{\Hom}{\operatorname{Hom}}
\newcommand{\Spec}{\operatorname{Spec}} 
\newcommand{\Sp}{\operatorname{Sp}} 
 
\newcommand{\Sym}{\operatorname{Sym}} 
\newcommand{\cO}{\mathcal{O}}
\newcommand{\cH}{\mathcal{H}}
\newcommand{\cM}{\mathcal{M}}
\newcommand{\ram}{\mathrm{ram}}
\newcommand{\ur}{\mathrm{ur}}
\newcommand{\Ind}{\operatorname{Ind}}
\newcommand{\dslash}{\mathbin{/\mkern-4mu/}}

\textwidth = 14cm
\hoffset = -20pt

\title{On endoscopic \MakeLowercase{\textit{p}}-adic automorphic forms for $\SL_2$}
\author{Judith Ludwig}
\email{jludwig@math.uni-bonn.de}

\begin{document}
\date{\today}
\maketitle
\begin{abstract} We show the existence of some non-classical cohomological $p$-adic automorphic eigenforms for $\SL_2$ using endoscopy and the geometry of eigenvarieties. These forms seem to account for some non-automorphic members of classical global $L$-packets.  
\end{abstract}

\section{Introduction}

Let $\Pi(\theta)$ be an endoscopic $L$-packet of $\SL_2/\Q$ associated to an algebraic character 
\[\theta: \mathrm{Res}_{F/\Q}\mathbb{G}_m (\A) \rightarrow \C^*,\]
where $F/\Q$ is an imaginary quadratic extension in which $p$ splits. For a sufficiently small tame level $K^p \subset \SL_2(\A_f^p)$ an automorphic representation $\pi \in \Pi(\theta)$ gives rise to a point $x$ of critical slope on an eigenvariety of tame level $K^p$. 

Let $E(K^p,\mathfrak{m})$ be such an the eigenvariety, built from the suitably localized completed cohomology groups $\til{H}^1(K^p)_\mathfrak{m}$ as in \cite{emerton}. Emerton's eigenvariety comes equipped with a coherent module $\cM$ of $p$-adic automorphic forms of tame level $K^p$. If $z$ is a point on $E(K^p,\mathfrak{m})$ coming from a classical automorphic representation, the fibre $\cM_{\zbar}:= \cM_z \otimes k(z)$ contains a subspace 
\[\cM^{cl}_{\zbar} \subset \cM_{\zbar}
\]
of classical automorphic forms.  
Our main theorem concerns the difference between this classical subspace and the whole fibre at critical points $x$ coming from endoscopic $L$-packets as above. 
Under some technical assumptions on the \mbox{$L$-packet} (see \textit{Hypothesis $(\star)$} in Section \ref{sec: mainthm}) we can prove that these spaces differ. The main result of this paper is the following.  
 
\begin{thmn}[Theorem \ref{mainthm}] Let $\Pi(\theta)$ be an endoscopic $L$-packet satisfying \textit{Hypothesis~$(\star)$}. Then there exists a tame level $K^p$ such that the fibre $\mathcal{M}_{\xbar}$ at the critical point $x \in E(K^p,\mathfrak{m})(\Qbar_p)$ contains non-classical $p$-adic automorphic forms, i.e.
\[\mathcal{M}_{\xbar}/\mathcal{M}_{\xbar}^{cl}\neq 0.
\]
\end{thmn}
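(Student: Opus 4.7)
The plan is to compute both $\dim \mathcal{M}_{\xbar}^{cl}$ and $\dim \mathcal{M}_{\xbar}$ and to exhibit a strict inequality between them. Via Emerton's construction, $\mathcal{M}_{\xbar}$ is realised as the generalised eigenspace for the Hecke eigensystem at $x$ in a $p$-adic Banach representation of $\SL_2(\Q_p)$ built from the localised completed cohomology $\til{H}^1(K^p)_\mathfrak{m}$, while $\mathcal{M}_{\xbar}^{cl}$ is the subspace of locally algebraic vectors, which correspond to classical automorphic forms in the global $L$-packet $\Pi(\theta)$ contributing to the eigensystem at $x$.

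First I would compute an upper bound for $\dim \mathcal{M}_{\xbar}^{cl}$ using the Labesse--Langlands multiplicity formula for $\SL_2$: among the global members of $\Pi(\theta)$, only those satisfying the global sign condition are automorphic, each occurring with multiplicity one, and each such $\pi'$ contributes a computable number of tame-$K^p$-fixed vectors carrying the chosen critical-slope refinement at $p$. Under Hypothesis~$(\star)$ this yields a precise count $N_{\mathrm{cl}}$, controlled by the local packet structure of $\Pi(\theta)_v$ at the ramified primes.

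To exhibit an eigenform not accounted for by this count, I would pass to the auxiliary $\GL_2$-picture via the automorphic induction $\Pi := AI(\theta)$, a cuspidal automorphic representation of $\GL_2/\Q$ whose critical-slope refinement defines a point $x_{\GL_2}$ on the $\GL_2$-eigenvariety at a compatible tame level $\til{K}^p$ chosen so that $\til{K}^p \cap \SL_2(\A_f^p) = K^p$. The local structure of this $\GL_2$-eigenvariety at a critical-slope CM point is well understood through work in the spirit of Bella\"iche--Chenevier, and in particular yields a $p$-adic companion vector which is not locally algebraic. Restriction of admissible representations along $\SL_2(\Q_p) \hookrightarrow \GL_2(\Q_p)$, together with the compatibility between the two completed cohomologies, transports this extra vector to an element of $\mathcal{M}_{\xbar}$; the resulting space thus contains strictly more than $N_{\mathrm{cl}}$ linearly independent eigenvectors, forcing $\mathcal{M}_{\xbar}/\mathcal{M}_{\xbar}^{cl}\neq 0$.

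The main obstacle is precisely this descent from $\GL_2$ to $\SL_2$: one must verify that the $\GL_2$-companion restricts non-trivially to the Hecke eigenspace at $x$ on the $\SL_2$-side and that its image is genuinely independent of all classical contributions, rather than being absorbed into a classical member of $\Pi(\theta)$ already counted in $N_{\mathrm{cl}}$. Hypothesis~$(\star)$ is used to pin down the local packet structure at the ramified primes so that this absorption is ruled out, yielding the required non-classical vector in the fibre at $x$.
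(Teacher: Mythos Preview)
Your approach diverges substantially from the paper's, which in fact explicitly flags it as unavailable: the introduction states that ``we cannot deduce our result from the $\GL_2$-situation''. The paper's argument is purely geometric on the $\SL_2$-eigenvariety. One surrounds $x$ by a Zariski-dense set of \emph{stable} classical points $z$, computes via Proposition~\ref{fibres} that $\dim (\cM'_{\zbar})^{cl} \geq 2\dim (\cM'_{\xbar})^{cl}$ for a suitable coherent direct summand $\cM'\subset\cM$ (the factor of $2$ arising because both archimedean discrete series members $D_{k+1}^\pm$ contribute at a stable point, whereas by Lemma~\ref{mult} only one does at the endoscopic point $x$), and then invokes semi-continuity of the fibre rank of $\cM'$ to force $\dim \cM'_{\xbar} > \dim (\cM'_{\xbar})^{cl}$. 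No comparison with $\GL_2$ enters.

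Your proposal has a concrete gap at the descent step. On the $\GL_2$-eigencurve, at a critical-slope CM point, the non-classical overconvergent form lies only in the \emph{generalised} eigenspace; the honest eigenspace --- equivalently, the fibre of the coherent sheaf over that point of the eigencurve --- consists entirely of classical forms. This is precisely the content of Remark~\ref{CMB}(1). So the ``companion vector'' you wish to transport is not in the $\GL_2$-analogue of $\cM_{\xbar}$ at all, but one step up in a nilpotent filtration. There is no evident map carrying such a vector into the fibre $\cM_{\xbar}$ on the $\SL_2$-side: restriction of completed cohomology along $\SL_2 \hookrightarrow \GL_2$ does not by itself convert generalised eigenvectors for the $\GL_2$-Hecke algebra into honest elements of the $\SL_2$-fibre, and your identification of $\cM_{\xbar}$ with a ``generalised eigenspace'' is not the definition in force here --- it is the fibre of Emerton's coherent sheaf at a point of the reduced eigenvariety. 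The theorem asserts that this honest fibre already exceeds its classical subspace, a strictly stronger statement than the $\GL_2$ phenomenon; the semi-continuity argument across the family of stable points is what supplies the needed leverage, and your outline provides no substitute for it.
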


Our motivation for this theorem is twofold. Firstly, we would like to understand how endoscopy works in the setting of the $p$-adic Langlands programme. As a first step we would like to know whether \textit{non automorphic} members of classical global $L$-or $A$-packets play a role in the $p$-adic setting. We ask the following vague question: 
Let $G/\Q$ be a symplectic, orthogonal or unitary group. Let $\Pi$ be a global endoscopic cohomological $L$- or $A$-packet and let $\pi \in \Pi$ be an element such that the multiplicity $m(\pi)$ in the automorphic spectrum of $G$ is equal to zero.
Then does $\pi$ occur $p$-adically? Is there a point on a suitable eigenvariety and a non-classical $p$-adic eigenform $f$ for the system of Hecke eigenvalues determined by $\pi$ such that $f$ accounts for $\pi$? The non-classical forms appearing in our theorem seem to provide a $p$-adic account for non-automorphic representations in $\Pi(\theta)$, although we caution the reader that this is to be understood only in a vague sense.

The second motivation to study the spaces $\cM_{\xbar}$ of the theorem comes from a related result for the group $\GL_2$. 
There is a strong relationship between automorphic representations of the groups $\GL_2$ and $\SL_2$; every member of a global $L$-packet of $\SL_2$ occurs in the restriction to $\SL_2$ of an automorphic representation of $\GL_2$. In the language of modular forms, endoscopic cohomological $L$-packets are those arising from modular forms with complex multiplication by an imaginary quadratic field. Now consider points of critical slope on a Coleman-Mazur eigencurve that come from such modular forms. These points are interesting from a geometric point of view as the map down to weight space ramifies there. Equivalently but more importantly for us, the corresponding generalized eigenspace of overconvergent forms contains non-classical forms (see Remark \ref{CMB}). Our theorem can be seen as an analogue for $\SL_2$ of this fact, with the notable difference that our theorem concerns honest eigenspaces. 

Although the situations for $\GL_2$ and $\SL_2$ are clearly related, we cannot deduce our result from the $\GL_2$-situation. 
Instead we use endoscopy and geometry on the $\SL_2$-eigencurve to prove our result. We briefly explain the strategy and the key ideas of the proof. 

\textbf{Strategy}: Show that a point $x$ as in the theorem has an open affinoid neighbourhood $U \subset E(K^p,\mathfrak{m})$ that contains a Zariski-dense set of classical \textit{stable} points, i.e., points arising from $L$-packets $\Pi$ that are stable. 
Then compare the dimension of $\cM^{cl}_{\xbar}$ to the dimension of the classical subspace $\cM_{\zbar}^{cl}$ of the fibre at a stable point $z$ and show that the rank at stable points is larger. This implies the theorem as the fibre rank of a coherent sheaf is semi-continuous. 
  
We briefly explain why there is more contribution from automorphic representations at stable points than at endoscopic points. For that first note that the classical subspace $\cM_{\zbar}^{cl}$ of the fibre $\cM_{\zbar}$ at any classical point $z$ of weight $k$ decomposes into a direct sum
\[\cM_{\zbar}^{cl} \cong \bigoplus_{\pi \in X(z)} m(\pi)\  M_{\pi} \otimes H^1_{rel.Lie}(\mathfrak{g},\SO_2(\R), \pi_\infty \otimes \Sym^k(\C^2))\]
indexed over a certain subset $X(z)$ of the global $L$-packet $\Pi(z)$ defined by the point~$z$. The number $m(\pi)$ is the multiplicity of $\pi$ in the cuspidal automorphic spectrum and $M_{\pi}$ is the tensor product of a subspace of the smooth Jacquet module $J_B(\pi_p)$ and the spaces $\pi_l^{K_l}$, with $l$ running through the set of primes where $K_l\neq \SL_2(\Z_l)$. 

\textbf{Crucial observation}: Ignoring the spaces $M_\pi$, the classical subspace of the fibre at $x$ has half as many non-zero summands as the space at a stable point $z$. The reason for this is that in a stable packet all elements are automorphic, contrary to the situation for the $L$-packet $\Pi(\theta)$. 
The archimedean $L$-packet $\Pi(\theta)_\infty = \{D_{k(\theta)}^+,D_{k(\theta)}^-\}$ is a discrete series $L$-packet of size two and given $\tau_f \in \Pi(\theta)_f$ exactly one of the representations 
\[\tau_f\otimes D_{k(\theta)}^+, \tau_f \otimes D_{k(\theta)}^-
 \]
is automorphic.
So if we manage to control the contributions of the terms $M_{\pi}$ in a family, in other words if we can find some tame level that minimizes the terms $M_\pi$ for $\pi \in X(x)\subset \Pi(\theta)$, then we get the estimate that we seek. This is achieved using on the one hand the theory of newforms and conductors for $\SL_2$ as developed in \cite{LR}. The main technical result here is Proposition \ref{fibres}. The second ingredient is a rigidity result on the behaviour of $L$-packets in families (see Proposition \ref{rigidity}).

In \cite{p-adicpackets}, we constructed non-classical points on certain eigenvarieties for a definite inner form $G$ of $\SL_2$ using endoscopy and a $p$-adic version of the Labesse-Langlands transfer \cite{p-adicLL}. We used a local $L$-packet of size two to change multiplicities in a global $L$-packet, however in \cite{p-adicpackets} that auxiliary prime was non-archimedean. In this paper we explore the fact that for the split group $\SL_2(\R)$ the discrete series $L$-packets are of size two.
The advantage of the method of this paper is that it should work for other groups as well. 

\textbf{Outline of the paper.} After recalling some background on the cohomology of the symmetric space of $\SL_2$ in Section 2, we summarize results of \cite{LR} on conductors of smooth representations of $\SL_2(\Q_l)$. The important results are the dimension formulas of Proposition \ref{newforms}. Section 4 is on the local symmetric square lifting, we need Proposition \ref{sym2} to prove the rigidity result \ref{rigidity} in the next section. The rest of Section 5 sets up the eigenvarieties we use. We then prove our main theorem in the last section.

\textbf{Acknowledgements}. The author would like to thank Kevin Buzzard, Ga{\"e}tan Chenevier, Eugen Hellmann and Yichao Tian for helpful conversations, Joachim Schwermer for pointing her to the reference \cite{schwermer}, and James Newton and Peter Scholze for their helpful comments on an earlier draft of this manuscript. The author was supported by the SFB/TR 45 of the DFG.

\section{Background}\label{sec: background}

Let $\mathfrak{g}=\mathfrak{sl}_2$ be the Lie-algebra of $\SL_2(\R)$ and define $K_\infty = \SO_2(\R)$. Let $W_\C:=\operatorname{Sym}^{k}(\C^2)\cong\breve{W}_\C$ be the algebraic representation of $\SL_2$ of highest weight $k$.

Below we need the following result on the relative Lie-algebra cohomology.
\begin{lem}[{\cite[Section 2.1]{Labesse-Schwermer}}]\label{(g,K)}
\[
H^1_{rel.Lie}(\mathfrak{g}, K_\infty, W_\C \otimes \pi_\infty) = \begin{cases}
      \C, & \text{if}\ \pi_\infty = D^{\pm}_{k+1} \\
      0, & \text{otherwise.}
    \end{cases}
\]
\end{lem}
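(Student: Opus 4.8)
The plan is to carry out the standard $(\mathfrak g, K_\infty)$-cohomology computation for $\SL_2(\R)$, reducing the lemma to a finite piece of $K_\infty$-type bookkeeping once an infinitesimal-character constraint is imposed. First I would realise $H^{1}_{rel.Lie}(\mathfrak g, K_\infty, V)$ as the degree-one cohomology of the complex $C^{j}=\Hom_{K_\infty}(\Lambda^{j}\mathfrak p, V)$, where $\mathfrak p$ is the $(-1)$-eigenspace of the Cartan involution on $\mathfrak g=\mathfrak{sl}_2(\R)$. As $\dim\mathfrak p=2$ this complex lives in degrees $0,1,2$; over $\C$ one has $\mathfrak p_\C=\mathfrak p^{+}\oplus\mathfrak p^{-}$, where $\mathfrak p^{\pm}$ are the $\pm 2$ weight spaces for $K_\infty=\SO_2(\R)$, while $\Lambda^{2}\mathfrak p_\C$ is one-dimensional with trivial $K_\infty$-action. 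Taking $V=W_\C\otimes\pi_\infty$ and using $\breve{W}_\C\cong W_\C$, I would rewrite $C^{j}=\Hom_{K_\infty}(\Lambda^{j}\mathfrak p\otimes W_\C,\pi_\infty)$; in particular $C^{0}\cong C^{2}\cong\Hom_{K_\infty}(W_\C,\pi_\infty)$.

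Next I would apply Wigner's lemma: the Casimir acts on $\Lambda^{j}\mathfrak p\otimes W_\C$ through the infinitesimal character of $W_\C$ and on $\pi_\infty$ through its own, so a homotopy argument forces $H^{*}=0$ unless $\pi_\infty$ has the infinitesimal character of $\Sym^{k}(\C^{2})$. The irreducible $(\mathfrak g,K_\infty)$-modules with that infinitesimal character are $W_\C$ itself, the two discrete series $D^{\pm}_{k+1}$, and the unique irreducible principal series $\pi^{\mathrm{irr}}$ at that parameter, whose $K_\infty$-weights all have parity opposite to that of $k$. Since $W_\C$ has $K_\infty$-weights $\{-k,-k+2,\dots,k\}$ and $\mathfrak p^{\pm}$ has weight $\pm 2$, every $K_\infty$-weight occurring in $\Lambda^{j}\mathfrak p\otimes W_\C$ is $\equiv k\pmod 2$; hence $C^{\bullet}=0$, and so $H^{*}=0$, when $\pi_\infty=\pi^{\mathrm{irr}}$. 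Together with Wigner's lemma this settles every $\pi_\infty$ outside $\{W_\C, D^{+}_{k+1}, D^{-}_{k+1}\}$.

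It remains to treat those three representations. For $D^{+}_{k+1}$, whose $K_\infty$-weights are $\{k+2,k+4,\dots\}$, the weight comparison gives $C^{0}\cong C^{2}\cong\Hom_{K_\infty}(W_\C,D^{+}_{k+1})=0$, whereas $C^{1}=\Hom_{K_\infty}\big((\mathfrak p^{+}\oplus\mathfrak p^{-})\otimes W_\C,\,D^{+}_{k+1}\big)$ is one-dimensional, carried by the single weight $k+2$ present in $\mathfrak p^{+}\otimes W_\C$. With $C^{0}=C^{2}=0$ no differentials remain, so $H^{1}=C^{1}=\C$ and $H^{0}=H^{2}=0$; the case $D^{-}_{k+1}$ is symmetric. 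For $\pi_\infty=W_\C$ one is computing $H^{*}(\mathfrak g,K_\infty,W_\C\otimes\breve{W}_\C)$: Schur's lemma gives $H^{0}=(\End W_\C)^{\mathfrak g}=\C$, Poincar\'e duality gives $H^{2}\cong H^{0}$, and since $\dim C^{0}=\dim C^{2}=k+1$ and $\dim C^{1}=2k$ the Euler characteristic is $2$, forcing $H^{1}=0$ (equivalently, $H^{*}(\mathfrak g,K_\infty;\C)$ is the cohomology of the compact dual $S^{2}$). Collecting the cases proves the lemma.

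The discrete-series step is immediate once the normalisation of $D^{\pm}_{k+1}$ (minimal $K_\infty$-weight $\pm(k+2)$) is fixed, so I expect the only point demanding real care is ruling out contributions in degree one from the two non-discrete-series representations sharing the infinitesimal character of $W_\C$, namely the finite-dimensional representation and the irreducible principal series. All of this is classical and is the content of the cited \cite{Labesse-Schwermer}.
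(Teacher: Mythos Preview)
Your argument is correct and complete: the reduction via Wigner's lemma to the four irreducible $(\mathfrak g,K_\infty)$-modules with the infinitesimal character of $\Sym^k$, the parity elimination of the irreducible principal series, the direct $K_\infty$-type count for $D^{\pm}_{k+1}$, and the Euler-characteristic computation for $\pi_\infty=W_\C$ all go through as stated.

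The paper, however, does not prove this lemma at all --- it simply records the statement and cites \cite[Section 2.1]{Labesse-Schwermer}. So your proposal does not so much take a different route as supply the argument the paper omits. What you have written is essentially the computation carried out in Labesse--Schwermer (and in Borel--Wallach more generally), so there is nothing to compare beyond noting that you have unpacked the citation.
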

Here $D_{k+1}^+$ and $D_{k+1}^-$ denote the holomorphic and antiholomorphic discrete series representation of weight $k+1$, i.e., the infinite-dimensional constituents of the non-unitary principal series representation $I_{k+1}$ of $\SL_2(\R)$ associated to the character $t \mapsto t^{k+1} (\mathrm{sgn}(t))^{k}$ of $\R^*$, c.f.\ \cite{Labesse-Schwermer} Section 2.1.  

For a compact open subgroup $K=\prod_l K_l \subset \SL_2(\A_f)$ consider the symmetric space
\[Y(K)= \SL_2(\Q)\backslash \SL_2(\A)/K \SO_2(\R).
\]
Let $\mathcal{W}_k$ denote the local system on $Y(K)$ attached to $W_\C$ and consider the singular cohomology group
\[H^1(K,W_\C):= H^1(Y(K),\mathcal{W}_k).\]
Recall that the parabolic cohomology\footnote{This is often called interior cohomology.} 
\[H_{\mathrm{par}}^1(K,W_\C):=H_{\mathrm{par}}^1(Y(K),\mathcal{W}_k)
\] 
is defined as the image of the natural map 
\[H^1_c(Y(K),\mathcal{W}_k) \rightarrow H^1(Y(K),\mathcal{W}_k).
\]

By \cite[Corollary 2.3]{schwermer} parabolic cohomology and cuspidal cohomology agree for the groups $\SL_2/\Q$, we have a decomposition 
\begin{equation}\label{par} H_{\mathrm{par}}^1(K,W_\C)= \bigoplus_{\substack{\pi \ adm.\ rep.\\ of \SL_2(\A)}} m(\pi) \ \pi_f^{K}\otimes H^1_{rel.Lie}(\mathfrak{g}, K_\infty, W_\C \otimes \pi_\infty), 
\end{equation}
where $m(\pi)$ is the multiplicity of $\pi$ in the cuspidal automorphic spectrum. 

Let $S(K)$ be a finite set of primes such that $K_l = \SL_2(\Z_l)$ for all $l \notin S(K)$. The unramified Hecke algebra
\[\mathcal{H}^{\ur} = \bigotimes{'}_{l \notin S(K)}  C^{\infty}_c(\SL_2(\Q_l)\dslash\SL_2(\Z_l))
\] 
acts on $H^1(K,W_\C)$ and $H_{\mathrm{par}}^1(K,W_\C)$. We recall that for a \textit{cuspidal} automorphic representation $\pi$ of $\SL_2(\A)$ occurring in $\varinjlim_{K'} H^1(K',W_\C)$ with $\pi_f^K \neq 0$ and unramified system of Hecke eigenvalues $\lambda:\mathcal{H}^{\ur}\rightarrow \C^*$, we have an isomorphism
\[H^1(K,W_\C)^\lambda \cong H_{\mathrm{par}}^1(K,W_\C)^\lambda. 
\]

We also remind the reader that any cuspidal automorphic representation $\pi$ of $\SL_2$ occurs with multiplicity one in the space of automorphic forms (\cite[Theorem 4.1.1]{Ramakrishnan}).

For an irreducible smooth representation $\pi_p$ of $\SL_2(\Q_p)$ let $J_B(\pi_p)$ be its Jacquet-module with respect to the upper triangular Borel $B$. It is a smooth representation of the torus $T\subset \SL_2(\Q_p)$ of diagonal matrices. For a character $\chi:T \rightarrow \C^*$ we consider its eigenspace $J_B(\pi_p)^{\chi}$. 
\begin{lem}\label{jacdim}
Let $\pi_p \cong \Ind_B^{\SL_2(\Q_p)}(\chi)$ be an irreducible unramified principal series representation. Then $J_B(\pi_p)^{\chi}$ is one-dimensional. 
\end{lem}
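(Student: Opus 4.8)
The plan is to compute the (normalised) Jacquet module $J_B(\pi_p)$ explicitly as a smooth representation of $T$. Let $W=\{1,s\}$ be the Weyl group of $\SL_2$; under the identification $T\cong\Q_p^\times$, $t\mapsto\operatorname{diag}(t,t^{-1})$, the nontrivial element $s$ acts on $T$, hence on characters of $T$, by inversion. By the geometric lemma of Bernstein--Zelevinsky, $J_B\bigl(\Ind_B^{\SL_2(\Q_p)}\chi\bigr)$ has length at most $|W|=2$ and its Jordan--H{\"o}lder constituents are the characters ${}^w\chi$, $w\in W$, namely $\chi$ and $\chi^{-1}$, each occurring exactly once. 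In particular $\dim J_B(\pi_p)^\chi\le2$, and $\dim J_B(\pi_p)^\chi\le1$ whenever $\chi\neq\chi^{-1}$.

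To pin the dimension down to exactly $1$ uniformly --- in particular in the degenerate case $\chi^2=1$, where Jordan--H{\"o}lder counting alone is not enough --- I would count maps \emph{into} $J_B(\pi_p)$ and invoke Bernstein's second adjointness theorem, which identifies $J_B$ with the right adjoint of normalised parabolic induction $\Ind_{\overline{B}}^{\SL_2(\Q_p)}$ from the opposite Borel $\overline{B}$. Since $\chi$ is a one-dimensional representation, $J_B(\pi_p)^\chi\cong\Hom_T\bigl(\chi,J_B(\pi_p)\bigr)$, and second adjointness gives
\[
\Hom_T\bigl(\chi,\ J_B(\pi_p)\bigr)\;\cong\;\Hom_{\SL_2(\Q_p)}\bigl(\Ind_{\overline{B}}^{\SL_2(\Q_p)}\chi,\ \pi_p\bigr).
\]
Conjugating by a representative of $s$ identifies $\Ind_{\overline{B}}^{\SL_2(\Q_p)}\chi$ with $\Ind_B^{\SL_2(\Q_p)}(\chi^{-1})$, and the standard intertwining operator then identifies this with $\Ind_B^{\SL_2(\Q_p)}(\chi)=\pi_p$, using that $\pi_p$ is irreducible. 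Hence the right-hand side equals $\End_{\SL_2(\Q_p)}(\pi_p)=\C$ by Schur's lemma, so $\dim J_B(\pi_p)^\chi=1$.

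The content of the lemma really lies in the Weyl-fixed case $\chi=\chi^{-1}$, i.e.\ $\chi^2=1$: there $J_B(\pi_p)$ has repeated Jordan--H{\"o}lder factor $\chi$, yet its honest $\chi$-eigenspace is still one-dimensional because $J_B(\pi_p)$ is forced to be a non-split self-extension --- it cannot be $\chi^{\oplus2}$, for that would make $\End(\pi_p)$ two-dimensional via Frobenius reciprocity --- and a non-split self-extension of a character by itself has one-dimensional eigenspace. For unramified $\chi$ this case only arises for $\chi=\mathbf{1}$ (the trivial character), since $\Ind_B$ of the unramified quadratic character is reducible; for $\chi\neq\chi^{-1}$ one simply has $J_B(\pi_p)\cong\chi\oplus\chi^{-1}$ (the relevant $\mathrm{Ext}^1$-groups of $T$-characters vanish). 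I expect the only real subtlety in a careful write-up to be exactly this non-semisimplicity in the Weyl-fixed case, which is dealt with uniformly by the second-adjointness argument above and ad hoc otherwise.
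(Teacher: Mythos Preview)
Your argument is correct but more elaborate than the paper's. The paper simply notes that irreducibility of $\Ind_B^{\SL_2(\Q_p)}(\chi)$ forces $\chi\neq\chi^{-1}$, whence the two Jordan--H\"older constituents of the Jacquet module are distinct characters and the $\chi$-eigenspace is visibly one-dimensional; the Weyl-fixed case you single out as ``the content of the lemma'' is dismissed as not arising under the hypothesis. Your second-adjointness route does buy something, however: as you yourself observe, $\chi=\mathbf{1}$ gives an irreducible unramified principal series for $\SL_2(\Q_p)$ (only the \emph{nontrivial} quadratic character produces reducibility), so the paper's implication ``irreducible $\Rightarrow\chi\neq\chi^{-1}$'' is not literally correct at this boundary point, and your uniform argument $\Hom_T(\chi,J_B(\pi_p))\cong\End_{\SL_2(\Q_p)}(\pi_p)\cong\C$ covers it cleanly. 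In the paper's application the character is $\chi=\theta_w\theta_{\overline w}^{-1}$ for a nontrivial Hecke character $\theta$, hence never trivial, so this affects only the stated generality of the lemma rather than the main theorem.
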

\begin{proof} The principal series representation is irreducible, therefore $\chi$ is not a quadratic character, i.e. $\chi \neq \chi^{-1}$. The Jacquet-module $J_B(\Ind_B^{\SL_2(\Q_p)}(\chi))$ is two-dimensional and its semi-simplification as a $\C[T]$-module is given by 
\[J_B(\Ind_B^{\SL_2(\Q_p)}(\chi))^{ss}\cong \delta_B^{1/2}\chi \oplus \delta_B^{1/2}\chi^{-1},\]
where $\delta_B$ is the modulus character.
\end{proof}

\section{Newforms for $\SL_2(\Q_l)$} \label{sec: newforms}
In Section \ref{sec: mainthm} we need some of the results of \cite{LR} which we summarize here.  
Let $l\neq 2$ be a prime number. Given an irreducible smooth representation $\til{\pi}$ of $\GL_2(\Q_l)$ its restriction to $\SL_2(\Q_l)$ breaks up into a finite direct sum of irreducible smooth representations of $\SL_2(\Q_l)$, each occurring with multiplicity one. A local $L$-packet of $\SL_2(\Q_l)$ is a finite set of irreducible smooth representations of $\SL_2(\Q_l)$ arising in this way. They are of size $1,2$ or $4$. We refer to \cite{LL} for details. 
 
\begin{defn} 
\begin{enumerate}
	\item We say a local $L$-packet of $\SL_2(\Q_l)$ is unramified, if it contains a representation $\pi_l$ with $\pi_l^{\SL_2(\Z_l)} \neq 0$. 
Otherwise we call it ramified.
\item A local $L$-packet $\Pi$ of $\SL_2(\Q_l)$ is called supercuspidal if one (equiv. all) $\pi \in \Pi$ are supercuspidal.
\end{enumerate}
\end{defn}

\begin{rem}\label{sc2} In an unramified local $L$-packet the element $\pi_l$ with $\pi_l^{\SL_2(\Z_l)} \neq 0$ is unique and we denote it by $\pi_l^0$. 
Supercuspidal local $L$-packets are of cardinality two or four. As we have assumed $l \neq 2$, all supercuspidal representations of $\GL_2(\Q_l)$ can be constructed from  characters $\chi:E^*\rightarrow \C^*$ of quadratic extensions $E/\Q_l$ which do not factor through the norm $\mathrm{N}^E_{\Q_l}$. We write $\pi(\chi)$ for this representation and refer to Chapter 39 of \cite{BH} for the construction. Let $\overline{\chi}$ denote the conjugate of $\chi$ under the non-trivial element of $\tau \in \Gal(E/\Q_l)$, so $\overline{\chi}(x)=\chi(\tau{x})$. The $L$-packet $\Pi(\pi(\chi))$ defined by $\pi(\chi)$ is of size two unless $\chi\overline{\chi}^{-1}$ is a quadratic character, when it is of size four. 
\end{rem}

For $m\geq0 $ consider the compact open subgroups of $\SL_2(\Z_l)$ 
\[ K_1(m):=\left\{ \left(\begin{array}{cc} a & b\\ c & d \end{array}\right) \in \SL_2(\Z_l) | c \equiv 0 , d \equiv 1 \mod l^m  \right\}
\]
\[\text{and } K_0(m):=\left\{ \left(\begin{array}{cc} a & b\\ c & d \end{array}\right) \in \SL_2(\Z_l) | c \equiv 0 \mod l^m  \right\}.
\]
The group $K_1(m)$ is normal in $K_0(m)$. The quotient $K_0(m)/K_1(m)$ is isomorphic to $\Z_l^*/ (1+l^m \Z_l)$. 
Define $\alpha:=\left(\begin{array}{cc} l & \\  & 1 \end{array}\right) \in \GL_2(\Q_l)$ and let 
\[K'_0(m):= \alpha^{-1}K_0(m) \alpha \subset \SL_2(\Q_l)
\]  
be the conjugate subgroup. 

Let $\eta:\Z_l^*\rightarrow \C^*$ be a character and let $c(\eta)$ be its conductor. Then for any $m\geq c(\eta)$, $\eta$ defines a character of $K_0(m)$ via 
$ \left(\begin{array}{cc} a & b\\ c & d \end{array}\right)\mapsto \eta(d)$, which we still denote by $\eta$ and similarly for $K'_0(m)$.
Let $\pi$ be an irreducible smooth representation of $\SL_2(\Q_l)$. Define
\[ 
\pi_\eta^{K_0(m)}:= \left\{v \in \pi: \pi\left(\left(\begin{array}{cc} a & b\\ c & d \end{array}\right)\right) v = \eta(d) v  \ \forall \left(\begin{array}{cc} a & b\\ c & d \end{array}\right) \in K_0(m) \right\}
\]
and $\pi_\eta^{K'_0(m)}$ analogously. 

Let $\omega_\pi$ denote the central character of $\pi$. Note that if  $\eta(-1) \neq \omega_\pi(-1)$, then $ \pi_\eta^{K_0(m)}=0$ and $\pi_\eta^{K'_0(m)}=0$. 
We remark here that the central characters of any two elements in a local $L$-packet agree. This is obvious from the definition of a local $L$-packet as the set of  representations occurring in the restriction of an irreducible smooth representation of $\GL_2(\Q_l)$. 

The authors of \cite{LR} define \textit{the conductor} $c(\pi)$ of an irreducible smooth representation $\pi$ of $\SL_2(\Q_l)$ as
\[c(\pi):= \min_{\eta: \eta(-1)=\omega_{\pi}(-1)} \min \{ m\geq 0: \pi_\eta^{K_0(m)} \neq 0 \text{ or } \pi_\eta^{K'_0(m)} \neq 0 \},
\]
where $\eta$ runs over all characters of $\Z_l^*$ with $\eta(-1)=\omega_{\pi}(-1)$.

\begin{prop}[{\cite[Theorem 3.4.1]{LR}}] Let $\pi$ be an irreducible admissible representation of $\SL_2(\Q_l)$. 
The conductor $c(\pi)$ depends only on the $L$-packet containing $\pi$. 
It agrees with the conductor $c(\til{\pi})$ of a minimal representation $\til{\pi}$ of $\GL_2(\Q_l)$ such that $\pi$ occurs in the restriction $\til{\pi}|_{\SL_2(\Q_l)}$. \end{prop}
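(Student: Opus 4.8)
The plan is to deduce the statement from Casselman's newvector theory for $\GL_2(\Q_l)$ via the restriction dictionary recalled above, carefully tracking the two families of subgroups $K_0(m)$ and $K'_0(m)$. Throughout, let $\til K_0(n):=\{\gamma\in\GL_2(\Z_l): c_\gamma\equiv 0\bmod l^n\}$ and $\til K_1(n):=\{\gamma\in\til K_0(n): d_\gamma\equiv 1\bmod l^n\}$ be the $\GL_2(\Z_l)$-analogues of $K_0(n)$ and $K_1(n)$, where $c_\gamma,d_\gamma$ denote the lower-left and lower-right entries of $\gamma$. Recall that two irreducible smooth representations of $\GL_2(\Q_l)$ have restrictions to $\SL_2(\Q_l)$ with a common constituent if and only if they differ by a twist by a character of $\Q_l^*$ through $\det$, and then their restrictions define the same $L$-packet (\cite{LL}); so a minimal $\til\pi$ as in the statement is one of smallest conductor in its twist class, and proving $c(\pi)=c(\til\pi)$ for every member $\pi$ of that packet and every such $\til\pi$ will at the same time show that $c(\pi)$ depends only on the packet. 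If $\til\pi$ is one-dimensional the packet is $\{\mathbf 1_{\SL_2}\}$ and both conductors are $0$; so from now on assume $\til\pi$, and hence each of its twists, is infinite-dimensional, so that Casselman's theory applies.

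First I would establish $c(\pi)\le c(\til\pi)$. Put $c:=c(\til\pi)$ and let $v_0\neq 0$ be the newvector of $\til\pi$: it is fixed by $\til K_1(c)$ and transforms under $\til K_0(c)$ through the character $\gamma\mapsto\eta_0(d_\gamma)$ with $\eta_0:=\omega_{\til\pi}|_{\Z_l^*}$ (forced by letting the centre act; in particular $c(\eta_0)\le c$). Writing $\til\pi|_{\SL_2(\Q_l)}=\bigoplus_i\pi_i$ and $v_0=\sum_i v_i$, each nonzero $v_i$ lies in $(\pi_i)_{\eta_0}^{K_0(c)}$ and satisfies $\eta_0(-1)=\omega_{\til\pi}(-1)=\omega_{\pi_i}(-1)$, so $c(\pi_i)\le c$ for such $i$. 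To reach the remaining packet members I would use that the $\pi_i$ form a single $\GL_2(\Q_l)$-conjugacy orbit and, since $\GL_2(\Q_l)=\{\operatorname{diag}(u,1):u\in\Q_l^*\}\cdot\SL_2(\Q_l)$, that this action factors through $\Q_l^*/(\Q_l^*)^2$, generated (for $l$ odd) by the classes of $l$ and of a non-square unit. Conjugation by $\operatorname{diag}(u,1)$, $u\in\Z_l^*$, normalizes $K_0(c)$, preserves $d_\gamma$, and fixes $v_0$ up to scalar, hence merely permutes the spaces $(\pi_i)_{\eta_0}^{K_0(c)}$; conjugation by $\alpha=\operatorname{diag}(l,1)$ carries $K_0(c)$ to $K'_0(c)$ while again preserving $d_\gamma$, hence sends $(\pi_i)_{\eta_0}^{K_0(c)}$ isomorphically to $(\alpha^{-1}\pi_i)_{\eta_0}^{K'_0(c)}$. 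Combining these, every member $\pi'$ of the packet has $(\pi')_{\eta_0}^{K_0(c)}\neq 0$ or $(\pi')_{\eta_0}^{K'_0(c)}\neq 0$, so $c(\pi')\le c$. This is precisely the step that forces the definition of $c(\pi)$ to involve \emph{both} $K_0(m)$ and $K'_0(m)$.

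Next I would prove $c(\til\pi)\le c(\pi)$. Suppose $0\neq v\in\pi_\eta^{K_0(m)}$ with $\eta(-1)=\omega_\pi(-1)$; the case $\pi_\eta^{K'_0(m)}\neq 0$ reduces to this by applying $\til\pi(\alpha)$, which carries $v$ into the packet member $\alpha\pi\subset\til\pi|_{\SL_2(\Q_l)}$ where it becomes $K_0(m)$-fixed up to $\eta$. Form $W:=\operatorname{span}\{\til\pi(\operatorname{diag}(1,u))v:u\in\Z_l^*\}\subset\til\pi$, a finite-dimensional space. Since $\til K_0(m)=\{\operatorname{diag}(1,u):u\in\Z_l^*\}\cdot K_0(m)$, since this torus normalizes $K_0(m)$ preserving $d_\gamma$, and since $v$ transforms under $K_0(m)$ by $\eta\circ d$, one checks that $W$ is $\til K_0(m)$-stable with $K_0(m)$ acting by $\eta\circ d$ on all of $W$. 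Decomposing $W$ under the abelian quotient $\til K_0(m)/K_0(m)\cong\Z_l^*$ (the isomorphism being $\det$) yields $0\neq w\in W$ on which $\til K_0(m)$ acts by $\gamma\mapsto\eta(d_\gamma)\nu(\det\gamma)$ for some character $\nu$ of $\Z_l^*$. Extend $\nu$ to a character $\til\nu$ of $\Q_l^*$ and set $\til\pi':=\til\pi\otimes(\til\nu^{-1}\circ\det)$, an infinite-dimensional twist of $\til\pi$; then in $\til\pi'$ the vector $w$ is fixed by $\til K_1(m)$ (using $c(\eta)\le m$, so $\eta$ is trivial on $1+l^m\Z_l$), whence $c(\til\pi)\le c(\til\pi')\le m$ by Casselman's theory. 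Minimizing over all admissible $(\eta,m)$ and over both families of subgroups gives $c(\til\pi)\le c(\pi)$.

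Combining the two inequalities gives $c(\pi)=c(\til\pi)$ for every member $\pi$ of the packet and every minimal lift $\til\pi$, and hence that $c(\pi)$ depends only on the packet. I expect the main obstacle to be the conjugation bookkeeping in the second step --- arranging that the single $\GL_2(\Q_l)$-newvector of a minimal lift controls \emph{all} members of the $\SL_2(\Q_l)$-packet, which is the conceptual reason both $K_0(m)$ and $K'_0(m)$ must enter the definition --- together with verifying the claims about the module $W$ and the twist $\til\pi'$ in the third step that let one pass back from an $\SL_2(\Q_l)$-eigenvector to a $\GL_2(\Q_l)$-newvector of the same level.
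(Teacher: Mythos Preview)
The paper does not supply its own proof of this proposition: it is quoted as \cite[Theorem 3.4.1]{LR} and no argument is given. So there is nothing to compare against; your write-up is a correct self-contained proof of the cited result, reducing both inequalities to Casselman's newvector theory for $\GL_2(\Q_l)$ via the restriction/twist dictionary, which is exactly the natural route.

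Two small cosmetic points. In the direction $c(\pi')\le c(\til\pi)$ you should conjugate by $\alpha^{-1}$ rather than $\alpha$: by definition $K'_0(c)=\alpha^{-1}K_0(c)\alpha$, so it is $\til\pi(\alpha^{-1})$ that carries a vector in $(\pi_{i_0})_{\eta_0}^{K_0(c)}$ into $(\alpha^{-1}\cdot\pi_{i_0})_{\eta_0}^{K'_0(c)}$. This is harmless for the argument since $l$ and $l^{-1}$ represent the same class in $\Q_l^*/(\Q_l^*)^2$, hence give the same permutation of the packet. Secondly, the clause ``fixes $v_0$ up to scalar, hence merely permutes the spaces'' is not what you actually use: the logical core is simply transitivity of the diagonal torus on the packet together with the observation that conjugation by $\operatorname{diag}(u,1)$ (for $u\in\Z_l^*$) or by $\alpha^{-1}$ sends a nonzero $v_{i_0}\in(\pi_{i_0})_{\eta_0}^{K_0(c)}$ to a nonzero vector in the appropriate $K_0(c)$- or $K'_0(c)$-eigenspace of the translated packet member. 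With these adjustments your argument is complete.
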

Here a representation $\til{\pi}$ of $\GL_2(\Q_l)$ is called minimal if $c(\til{\pi}\otimes \chi) \geq c(\til{\pi})$ for all characters $\chi$ of $\Q_l^*$. 
The proposition lets us talk about the conductor of a local $L$-packet $\Pi$ and write $c(\Pi)$ for it.

In \cite{LR}, the authors study the various spaces $\pi_\eta^{K_0(m)}$ and $\pi_\eta^{K'_0(m)}$ for $\pi$ varying in a given $L$-packet $\Pi$ and all $m \geq c(\Pi)$. They determine the dimensions of these spaces depending on the type of the $L$-packet. 
Contrary to the $\GL_2(\Q_l)$ case (see \cite{Casselman}), the first non-zero space $\pi_\eta^{K_0(c(\pi))}$ or $\pi_\eta^{K'_0(c(\pi))}$ is not necessarily always one-dimensional. 

We summarize the results of \cite{LR} for supercuspidal $L$-packets. For that let $\Pi$ be a $L$-packet arising from an irreducible supercuspidal representation $\til{\pi}$ of $\GL_2(\Q_l)$. One has to distinguish between two kinds of such $L$-packets. 
The first kind (called \textit{unramified supercuspidal $L$-packet} in \cite{LR}) is given by representations $\til{\pi}$ that are compactly induced from a representation $\til{\sigma}$ of $\til{Z}\GL_2(\Z_l)$. Here $\til{Z}$ denotes the center of $\GL_2(\Q_l)$. The second kind (called \textit{ramified supercuspidal $L$-packet} in \cite{LR}) is given by representations $\til{\pi}$ that are compactly induced from the normalizer $N_{\GL_2(\Q_l)}\til{I}$ of the standard Iwahori $\til{I}$ of $\GL_2(\Q_l)$ of upper triangular matrices mod $l$. Supercuspidal $L$-packets of the second kind are always of size two, those of the first kind can be of size four, namely when the representation $\til{\sigma}$ is of level one in the sense of Definition 3.3.1 in \cite{LR}. But when this level is greater than or equal to two, the resulting $L$-packet is of size two. We will avoid local packets of size four below. 

\begin{prop}\label{newforms}
Let $\Pi=\{\pi_1,\pi_2\}$ be a supercuspidal $L$-packet of cardinality two with conductor $c(\Pi)$. 
Let $n$ be $\left \lfloor{c(\Pi)/2}\right\rfloor$. Let $\eta$ be a character of $\Z_l^*$with $c(\eta) \leq n$. 
Then 
\begin{enumerate}
	\item For $\pi \in \Pi$, $ \dim (\pi_\eta^{K_0(c(\Pi))}) \in \{0,1,2\}$. If $\Pi$ is of the first kind, we have 
	\[\dim ((\pi_1)_\eta^{K_0(c(\Pi))})=0  \text{ and } \dim ((\pi_2)_\eta^{K_0(c(\Pi))})=2 
	\] 
	or vice versa. If $\Pi$ is of the second kind, then  
	\[\dim ((\pi_1)_\eta^{K_0(c(\Pi))})=\dim ((\pi_2)_\eta^{K_0(c(\Pi))})=1.
	\] 
	In particular we always have
	\[\sum_{\pi \in \Pi} \dim (\pi_\eta^{K_0(c(\Pi))}) = 2.
	\]
\item More generally for any $m \geq c(\Pi)$, 
\[\sum_{\pi \in \Pi} \dim (\pi_\eta^{K_0(m)}) =2(m-c(\Pi)+1).\] 
\end{enumerate}
\end{prop}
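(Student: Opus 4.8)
The plan is to reduce everything to known facts about newforms and conductors for $\GL_2(\Q_l)$, together with the branching behaviour of $\til{\pi}|_{\SL_2(\Q_l)}$. Fix a minimal supercuspidal representation $\til{\pi}$ of $\GL_2(\Q_l)$ with central character $\til{\omega}$ whose restriction to $\SL_2(\Q_l)$ is $\pi_1 \oplus \pi_2$; by the Proposition cited before the statement, $c(\til{\pi}) = c(\Pi)$. The first step is to relate the spaces $\pi_\eta^{K_0(m)}$ and $\pi_\eta^{K'_0(m)}$ for $\pi \in \Pi$ to suitable eigenspaces inside $\til{\pi}$. Concretely, for a character $\eta$ of $\Z_l^*$ with $c(\eta) \le m$ one considers the space of vectors in $\til{\pi}$ on which $K_0(m)$ (resp. $K'_0(m)$) acts through a character extending $\eta$ and restricting to $\til{\omega}$ on the centre; restricting such vectors to $\SL_2(\Q_l)$ and decomposing under $\pi_1 \oplus \pi_2$ expresses $\sum_{\pi \in \Pi} \dim \pi_\eta^{K_0(m)}$ (and similarly with $K'_0$) in terms of a $\GL_2$-newform count. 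The key input on the $\GL_2$ side is Casselman's theorem: the space of vectors in $\til{\pi}$ transforming under the mirabolic congruence subgroup of level $c(\til{\pi})$ by a fixed nebentypus is one-dimensional, and at deeper level $m$ it has dimension $m - c(\til{\pi}) + 1$.

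The second step is the level-$c(\Pi)$ case, part (1). Here I would combine the dimension count from Step 1 with the structure theory recalled just before the statement. The $L$-packet $\Pi$ has size two, so $\til{\pi}$ has exactly one nontrivial self-twist $\til{\pi} \cong \til{\pi}\otimes\mu$ with $\mu$ the quadratic character cutting out the relevant quadratic extension $E/\Q_l$; the two summands $\pi_1,\pi_2$ are swapped or fixed by the action of $\Q_l^*/\SL_2(\Q_l)$-conjugation according to whether $E/\Q_l$ is unramified or ramified, which is exactly the dichotomy "first kind"/"second kind". In the unramified (first kind) case the two summands have distinct behaviour under the extra torus action at level $c(\Pi)$, forcing the split $0+2$; in the ramified (second kind) case the conjugation by $\alpha$ intertwines the two summands symmetrically, forcing $1+1$. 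In both cases the total is $2$, which also follows directly from the $\GL_2$ newform count of Step 1 specialised to $m = c(\Pi)$ (giving $1$) after accounting for the fact that one must also sum the $K_0$ and $K'_0$ contributions, or equivalently for the index-two phenomenon relating $\SL_2$-fixed vectors to $\GL_2$-fixed vectors.

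The third step, part (2), is then a propagation argument in $m$. Again by Casselman, the $\GL_2$ newform space grows linearly: at level $m$ its dimension is $m - c(\til{\pi}) + 1$. Feeding this into the identity from Step 1 and summing over $\pi \in \Pi$ gives $\sum_{\pi\in\Pi}\dim(\pi_\eta^{K_0(m)}) = 2(m - c(\Pi)+1)$, the factor $2$ being the same index-two factor already visible at $m = c(\Pi)$ where it produces the value $2$. Alternatively one can argue directly on the $\SL_2$ side: the oldform operators coming from $\alpha^{\pm 1}$ inject $\pi_\eta^{K_0(m)} \hookrightarrow \pi_\eta^{K_0(m+1)}$ with controlled cokernel, and one checks the cokernel of the sum over $\Pi$ has total dimension $2$ at each step.

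I expect the main obstacle to be Step 1, i.e. keeping precise track of the index-two discrepancy between $\SL_2(\Q_l)$- and $\GL_2(\Q_l)$-invariants: one must be careful that $K_0(m)$ and $K'_0(m)$ are the two $\SL_2$-conjugacy classes of parahoric-type subgroups that together "see" the single $\GL_2$ newform line, and that the self-twist $\mu$ does not introduce an extra factor. Once the bookkeeping in Step 1 is done correctly, the splitting in part (1) and the linear growth in part (2) follow mechanically from the $\GL_2$ theory and the first/second-kind dichotomy; these are the routine parts and I would not belabour them.
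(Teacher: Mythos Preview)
The paper does not give an independent argument here; it simply cites \cite[Propositions 3.3.4 and 3.3.8]{LR}. So you are not being compared against a proof but against a pointer into the literature, and in fact Lansky--Raghuram establish those propositions by a detailed analysis of the compactly induced models for $\til{\pi}$, which is close in spirit to the reduction you sketch in Step~1.

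That said, your explanation of the factor $2$ has a genuine gap. The statement concerns only $K_0(m)$, not $K_0(m)$ together with $K'_0(m)$, so the doubling cannot come from ``summing the $K_0$ and $K'_0$ contributions'', and the vague ``index-two phenomenon'' you invoke is not the right mechanism. What actually happens is the following. Since $\til{\pi}|_{\SL_2(\Q_l)}=\pi_1\oplus\pi_2$ one has
\[
\sum_{\pi\in\Pi}\dim\pi_\eta^{K_0(m)}=\dim(\til{\pi})_\eta^{K_0(m)}.
\]
The subgroup $\{\mathrm{diag}(u,1):u\in\Z_l^*\}\subset\til K_0(m)$ normalises $K_0(m)$ and preserves the character $\eta$, hence acts on $(\til{\pi})_\eta^{K_0(m)}$; its $\chi$-eigenspace, viewed back in $\til\pi$, is exactly the Casselman space $(\til{\pi}\otimes\chi^{-1})^{\til K_1(m)}$, and compatibility with the central character forces $\chi^2=\til\omega\,\eta^{-1}$ on $\Z_l^*$. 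For $l$ odd and $\eta(-1)=\omega_\Pi(-1)$ there are \emph{precisely two} such $\chi$, differing by the unique quadratic character of $\Z_l^*$. That pair of square roots is the source of the $2$. The formula $2(m-c(\Pi)+1)$ then follows from Casselman's dimension count \emph{provided} both twists $\til\pi\otimes\chi^{-1}$ still have conductor $c(\Pi)$. This last verification is where the hypothesis $c(\eta)\le n=\lfloor c(\Pi)/2\rfloor$ is actually used and where the first-kind/second-kind dichotomy enters; you pass over it, and it is the real content of the Lansky--Raghuram computation. Your alternative inductive approach in Step~3 via oldform operators would also work, but the base case and the size of the cokernel at each step require the same conductor control that you have not supplied.
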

\begin{proof} This follows from \cite[Proposition 3.3.4]{LR} and \cite[Proposition 3.3.8]{LR}. 
\end{proof}

Note also that for a smooth irreducible representation $\pi$ of $\SL_2(\Q_l)$ 
\[\pi^{K_1(m)} = \bigoplus_{\eta} \pi_\eta^{K_0(m)},
\] 
where $\eta$ runs through all characters of $K_0(m)/K_1(m)$. This implies that if $\pi^{K_1(m)}\neq~0$ for some $m\geq 0$, then the conductor $c(\pi)$ of $\pi$ is less than or equal to $m$.

\section{Symmetric square lifting}
We recall some background on the local symmetric square lifting as constructed by Gelbart and Jacquet in \cite{lifts}. In this section, $F$ is a local non-archimedean field of residue characteristic not $2$, with residue field $\mathbb{F}$ of size $q$. 
 
\begin{lem}[{\cite[Prop.\ 3.3]{lifts}}] Let $\sigma$ be an irreducible admissible representation of $\GL_2(F)$. 
Then there exists an irreducible admissible representation $\pi$ of $\GL_3(F)$ unique up to isomorphism, called the \textit{lift of} $\sigma$, such that 
\begin{enumerate}
	\item the central character of $\pi$ is trivial,
	\item $\pi$ is self-dual,
	\item for any character $\chi$ of $F^*$, 
	\[ L(s,\pi\otimes \chi) = L(s,(\sigma\otimes \chi) \times \widetilde{\sigma})/L(s,\chi) \text{ and }\]
	\[\varepsilon(s, \pi\otimes \chi; \psi) = \varepsilon(s,(\sigma \otimes \chi) \times \widetilde{\sigma}; \psi)/\varepsilon(s,\chi;\psi).
	\]
\end{enumerate}
\end{lem}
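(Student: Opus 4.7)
The plan is to establish existence by an explicit case-by-case construction of $\pi$ from $\sigma$ following the Bernstein-Zelevinsky classification of irreducible admissible representations of $\GL_2(F)$, and to deduce uniqueness from the local converse theorem for $\GL_3$.

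For existence, first dispose of the principal series and Steinberg cases. If $\sigma$ is the irreducible principal series attached to characters $\mu_1, \mu_2$ of $F^*$, set $\pi$ to be the (normalized) parabolic induction from the Borel of $\GL_3(F)$ of the character $(\mu_1\mu_2^{-1}, 1, \mu_2\mu_1^{-1})$ of the diagonal torus. Property (1) is immediate, (2) follows from the self-dual arrangement of exponents, and (3) reduces to Tate's formalism once one expands
\[
L(s,(\sigma\otimes\chi)\times \widetilde{\sigma}) = \prod_{i,j \in \{1,2\}} L(s, \chi \mu_i \mu_j^{-1}),
\]
and observes that the diagonal $(i=j)$ contribution is $L(s,\chi)^2$, one factor of which cancels with the denominator while the other, together with the off-diagonal terms, reconstructs $L(s,\pi\otimes\chi)$. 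Twists of the Steinberg representation are handled analogously, with $\pi$ a suitable twisted Steinberg or Langlands quotient on $\GL_3$.

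The substantive case is supercuspidal $\sigma$. Since the residue characteristic is not $2$, by Remark \ref{sc2} one has $\sigma \cong \pi(\chi)$ for a character $\chi$ of a quadratic extension $E/F$ not factoring through the norm, and the Langlands parameter is $\varphi_\sigma = \Ind_{W_E}^{W_F}(\chi)$. Mackey then gives
\[
\varphi_\sigma \otimes \varphi_\sigma^\vee = \Ind_{W_E}^{W_F}(\chi \overline{\chi}^{-1}) \oplus \mathbf{1}_{W_F} \oplus \eta_{E/F},
\]
where $\eta_{E/F}$ is the quadratic character attached to $E/F$. This suggests defining $\pi$ as the representation of $\GL_3(F)$ with parameter $\Ind_{W_E}^{W_F}(\chi\overline{\chi}^{-1}) \oplus \eta_{E/F}$: a parabolic induction from the $(2,1)$-parabolic of $\pi(\chi\overline{\chi}^{-1})\otimes \eta_{E/F}$ when $\chi\overline{\chi}^{-1}$ is non-quadratic, and a principal series of $\GL_3$ when $\chi\overline{\chi}^{-1}$ is quadratic. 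Properties (1) and (2) are visible on the Galois side, and (3) follows from inductivity of L- and $\varepsilon$-factors in towers, combined with the cancellation of the $\mathbf{1}_{W_F}$ summand against the denominators $L(s,\chi)$ and $\varepsilon(s,\chi;\psi)$.

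For uniqueness, I would invoke the local converse theorem of Jacquet-Piatetski-Shapiro-Shalika for $\GL_3$: an irreducible admissible generic representation of $\GL_3(F)$ is determined up to isomorphism by the twisted L- and $\varepsilon$-factors against characters of $F^*$. All candidates produced above are generic, so the theorem applies. The main obstacle is the supercuspidal case: the $\varepsilon$-factor identity in (3) demands careful bookkeeping of root numbers through the inductivity formula, with extra delicacy depending on whether $E/F$ is ramified and whether $\chi\overline{\chi}^{-1}$ is quadratic (the latter being exactly the case where the $\SL_2$ $L$-packet attached to $\pi(\chi)$ has cardinality four, cf.\ Remark \ref{sc2}).
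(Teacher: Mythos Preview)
The paper does not prove this lemma; it is stated with a citation to Gelbart--Jacquet \cite[Prop.~3.3]{lifts}, and immediately afterwards the paper simply records the \emph{recipe} for the lift in three cases (principal series, supercuspidal, special). Your existence argument is exactly this recipe, phrased on the Galois side in the supercuspidal case via the Mackey decomposition of $\varphi_\sigma\otimes\varphi_\sigma^\vee$; this is the same construction the paper writes down, and your identification of $\pi$ as $\Ind_P^{\GL_3(F)}(\pi(\chi\overline{\chi}^{-1}),\eta_{E/F})$ matches Case (II) verbatim.

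Two points where your sketch is looser than the paper's description. First, in the principal series case you take $\pi$ to be the full induced representation $\Ind_B^{\GL_3(F)}(\mu_1\mu_2^{-1},1,\mu_2\mu_1^{-1})$; the paper is careful to pass to the Langlands quotient when the exponents are non-unitary (Case (I), $t_1\neq t_2$), since otherwise $\pi$ need not be irreducible even when $\sigma$ is. Second, for twists of Steinberg the lift is the \emph{untwisted} Steinberg of $\GL_3$ (Case (III)); there is no dependence on $\chi$, consistent with the fact that the lift only sees the $\SL_2$-packet.

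Your uniqueness argument via the $\GL_3$ local converse theorem is reasonable but has a gap: you check that the $\pi$ you construct is generic, but uniqueness requires ruling out a non-generic $\pi'$ satisfying (1)--(3), to which the converse theorem does not directly apply. One would need to argue separately (e.g.\ from the pole structure of $L(s,\pi'\otimes\chi)$) that any such $\pi'$ is forced to be generic. The paper does not engage with this, as the result is simply quoted.
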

For the definition of $L$-and $\varepsilon$-factors see \cite{GJ}. We give the recipe for the lift of $\sigma$. In the following, $B \subset \GL_3(F)$ denotes the Borel subgroup of upper triangular matrices, and $P \subset \GL_3(F)$ the parabolic subgroup of block form $(2,1)$. 

\begin{description}
	\item[Case (I)]  $\sigma= \Ind(\mu_1, \mu_2)$ for two characters $\mu_i:F^*\rightarrow \C^*$, $i=1,2$. Write $\mu_i=\chi_i |.|^{t_i}_F$  where $|x|_F=q^{-v(x)}$, $t_i \in \R$, and $\chi_i:F^*\rightarrow \C^*$ satisfies $\chi \overline{\chi}=1$.
If $t_1-t_2=0$, the representation 
\[\pi:=\Ind^{\GL_3(F)}_B(\mu_1\mu_2^{-1},1,\mu_2\mu_1^{-1})
\] 
is irreducible and a lift of $\sigma$.
If $t_1-t_2 \neq 0$, we may assume $t_1>t_2$. Then the representation $\Ind^{\GL_3(F)}_B(\mu_1\mu_2^{-1},1,\mu_2\mu_1^{-1})$ admits a maximal subrepresentation $\tau$ and the lift of $\sigma$ is given by the quotient 
\[\pi:=\Ind^{\GL_3(F)}_B(\mu_1\mu_2^{-1},1,\mu_2\mu_1^{-1})/\tau.
\]
\item[Case(II)] Assume $\sigma$ is a supercuspidal representation, so $\sigma = \pi(\chi)$ for some character $\chi:E^*\rightarrow \C^*$ of a quadratic extension $E/F$. As before let $\overline{\chi}$ denote the conjugate of $\chi$ under the non-trivial element of $\tau \in \Gal(E/F)$, so $\overline{\chi}(x)=\chi(\tau{x})$. Define $\mu:= \chi \overline{\chi}^{-1}$ 
and let $\pi(\mu)$ be the representation of $\GL_2(F)$ attached to $\mu$ via the Weil-representation.
Then the lift of $\sigma$ is given by 
\[\pi=\Ind^{\GL_3(F)}_P(\pi(\mu), \varpi_{E/F}),\]
where $\varpi_{E/F}:F^*\rightarrow \C^*$ is the character associated to $E/F$ by local class field theory.

We recall that the representation $\pi(\mu)$ is supercuspidal if and only if $\mu$ does not factor through the norm $N^E_F$, i.e., if and only if $\chi \overline{\chi}^{-1}$ is not a quadratic character.
\item[Case(III)]
Finally, let $\sigma$ be a special representation, i.e., 
\[\sigma \cong \mathrm{St} \otimes \chi\] 
is a twist of the Steinberg representation of $\GL_2(F)$ by a character $\chi$ of $F^*$. Then the lift of $\sigma$ is given by the Steinberg representation $\pi$ of $\GL_3(F)$.
It is the square integrable component of 
\[\Ind^{\GL_3(F)}_B(|\cdot|,1,|\cdot|^{-1}).\] 
\end{description}

By construction the lifts of $\sigma$ and any twist $\sigma \otimes \chi$ agree. Therefore we get an induced map
\begin{eqnarray*}
\left\{L\text{-packets of } \SL_2(F)\right\} &\rightarrow& \left\{ \text{irreducible smooth representations of } \GL_3(F)\right\}\\
\Pi(\sigma) &\mapsto& \pi,
\end{eqnarray*}
that sends the $L$-packet $\Pi(\sigma)$ defined by a representation $\sigma$ of $\GL_2(F)$ to the lift $\pi$ of $\sigma$.

Recall from Remark \ref{sc2} that the $L$-packet $\Pi(\sigma)$ defined by a supercuspidal representation $\sigma \cong \pi(\chi)$ is of size two unless $\chi \overline{\chi}^{-1}$ is a quadratic character, in which case it is of size four. 
\begin{prop}\label{sym2}
Let $\sigma$ and $\sigma'$ be irreducible admissible representations of $\GL_2(F)$ and assume $\sigma = \pi(\chi)$ is supercuspidal and comes from a character $\chi$ such that $\chi \overline{\chi}^{-1}$ is not quadratic. Assume the lifts $\pi$ of $\sigma$ and $\pi'$ of $\sigma'$ are in the same Bernstein component. Then $\sigma'$ is supercuspidal and defines an $L$-packet of $\SL_2(F)$ which is of size two. 
\end{prop}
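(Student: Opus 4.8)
The plan is to argue via the explicit classification of the symmetric square lift recalled above, together with the structure of Bernstein components for $\GL_3(F)$. By hypothesis the lift $\pi$ of $\sigma = \pi(\chi)$ is the representation $\Ind^{\GL_3(F)}_P(\pi(\mu), \varpi_{E/F})$ of Case (II), with $\mu = \chi\overline{\chi}^{-1}$ not quadratic, so that $\pi(\mu)$ is supercuspidal on $\GL_2(F)$. Hence $\pi$ is a full (irreducible) parabolic induction from the Levi of $P$ of a supercuspidal times a character, so its Bernstein component (its inertial support, or cuspidal support up to unramified twist and Weyl conjugacy) is the one attached to the Levi $\GL_2(F) \times \GL_1(F)$ and the supercuspidal pair $(\pi(\mu), \varpi_{E/F})$ — in particular \emph{not} the component of a supercuspidal representation of $\GL_3(F)$, nor the one containing the Steinberg of Case (III) (whose cuspidal support is the Borel), nor a generic point of Case (I) with $t_1 = t_2$ (again Borel-supported).

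First I would run through Cases (I), (II), (III) and record the cuspidal support of the lift in each: in Case (I) the lift is supported on the Borus (its cuspidal support consists of three characters); in Case (III) it is the Steinberg, again Borel-supported; only in Case (II) is the lift supported on the Levi $\GL_2 \times \GL_1$ with a genuinely supercuspidal $\GL_2$-factor. Since Bernstein components are disjoint and the cuspidal support (up to unramified twist) is a Bernstein-component invariant, the assumption that $\pi'$ lies in the same component as $\pi$ forces $\sigma'$ to be of Case (II) as well, say $\sigma' = \pi(\chi')$ for a character $\chi'$ of a quadratic extension $E'/F$ with $\mu' := \chi'\overline{\chi'}^{-1}$ not factoring through the norm, i.e. $\pi(\mu')$ supercuspidal. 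In particular $\sigma'$ is supercuspidal, which is the first assertion.

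Next I would pin down $E'$ and the size of the $L$-packet $\Pi(\sigma')$. Comparing cuspidal supports of $\pi = \Ind_P(\pi(\mu), \varpi_{E/F})$ and $\pi' = \Ind_P(\pi(\mu'), \varpi_{E'/F})$ in the same Bernstein component: the $\GL_1$-factors $\varpi_{E/F}$ and $\varpi_{E'/F}$ must agree up to an unramified twist, but both are quadratic characters cutting out the respective quadratic extensions, and the only unramified character they can differ by is the unramified quadratic one; in either case $\varpi_{E/F}$ and $\varpi_{E'/F}$ generate the same subgroup of order $2$ of $\Hom(F^*,\C^*)$ modulo unramified twist, which forces $E' = E$ (the quadratic extension is determined by $\varpi_{E/F}$ modulo unramified twist — one must check the three quadratic extensions of $F$ are distinguished this way, using $F$ non-dyadic). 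Then $\pi(\mu)$ and $\pi(\mu')$ lie in the same Bernstein component of $\GL_2(F)$, both being supercuspidal coming from characters of the \emph{same} $E$; by Remark~\ref{sc2} and the dichotomy recalled before Proposition~\ref{sym2}, $\Pi(\sigma') = \Pi(\pi(\chi'))$ has size four exactly when $\mu' = \chi'\overline{\chi'}^{-1}$ is quadratic, which is excluded since $\pi(\mu')$ is supercuspidal. Hence $\Pi(\sigma')$ has size two, as claimed.

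The main obstacle I anticipate is the bookkeeping in the middle step: showing that ``same Bernstein component for the $\GL_3$-lifts'' really does force $E' = E$ and that $\pi(\mu),\pi(\mu')$ land in the same $\GL_2$-component, rather than merely forcing both $\sigma,\sigma'$ to be supercuspidal with lifts of the Case~(II) \emph{shape}. This is where one has to use that, for $F$ of odd residue characteristic, the quadratic character $\varpi_{E/F}$ determines $E$ up to the ambiguity of an unramified twist, and that an unramified twist of $\varpi_{E/F}$ is again of the form $\varpi_{E''/F}$ only for $E'' \in \{E\} \cup \{\text{the unramified quadratic extension composed appropriately}\}$; a clean way to finish is to note that the restriction of $\pi$ to its inertial support already records the $\GL_2$-inertial class of $\pi(\mu)$, which by the theory of supercuspidals for $\GL_2(F)$ (non-dyadic) is determined by the pair $(E, \mu|_{\cO_E^*})$ up to conjugacy, and this is enough. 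Everything else is a direct appeal to the explicit lift recipe and to Remark~\ref{sc2}.
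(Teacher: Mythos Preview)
Your approach is essentially the paper's: both exclude Cases (I) and (III) by noting their lifts are Borel-supported whereas $\pi$ has cuspidal support on the $(2,1)$-Levi with a genuinely supercuspidal $\GL_2$-factor, forcing $\sigma'$ into Case (II). The paper bypasses your $E'=E$ detour entirely: being in the same Bernstein component already forces the $\GL_2$-factor of the cuspidal support of $\pi'$ to satisfy $\pi(\mu')\cong\pi(\mu)\otimes\eta$ for some unramified character $\eta$ of $F^*$, so $\pi(\mu')$ is automatically supercuspidal (hence $\mu'=\chi'\overline{\chi'}^{-1}$ is not quadratic and the packet has size two) without any need to identify $E'$.
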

\begin{proof} By the previous discussion we see that $\pi$ is given by $\Ind^{\GL_3(F)}_P(\pi(\mu), \varpi_{E/F})$ for a supercuspidal representation $\pi(\mu)$. As the parabolic $P$ of block form $(2,1)$ and the Borel subgroup $B$ are not conjugate, the representation $\pi'$ has to be of the form 
\[\pi' \cong \Ind^{\GL_3(F)}_P(\pi(\mu'), \varpi_{E'/F})\] 
for a supercuspidal representation $\pi(\mu')$ which satisfies $\pi(\mu)\cong \pi(\mu')\otimes \eta$ for some unramified character $\eta$ of $F^*$. By the above description of the lifts we see that this is only possible if $\sigma'=\pi(\chi')$ is also supercuspidal and $\chi'$ is such that $\chi' \overline{\chi}'^{-1}$ is not quadratic.
\end{proof}

\section{Eigenvarieties a la Emerton}
Let $E/\Q_p$ be a finite extension with ring of integers $\mathcal{O}$ and residue field $\mathbb{F}_q$. Fix an embedding $E \hookrightarrow \Qbar_p$ as well as an isomorphism $\iota:\Qbar_p \stackrel{\sim}{\rightarrow} \C$. The main reference for this section is \cite{emerton}.

Fix a tame level, i.e., a compact open subgroup 
\[K^p=\prod_{l}K_l \subset \SL_2(\A_f^p)
\] 
and let $S(K^p)$ be the minimal set of primes $l$, such that $K_l=\SL_2(\Z_l)$ for all $l \notin S(K^p)$, $l \neq p$.

We write $\mathcal{H}(K^p):=C^\infty_c(\SL_2(\A_f^p)\dslash K^p)$ for the prime to $p$ Hecke algebra over $E$ of level $K^p$, where the symbol $\dslash$ means left-and right invariance. It decomposes 
\[\mathcal{H}(K^p) \cong \mathcal{H}(K^p)^{\ram}\otimes_E \mathcal{H}(K^p)^{\ur},\] 
where 
\[\mathcal{H}(K^p)^{\ur} = \bigotimes{'}_{l \notin S(K^p)\cup\{p\}}  C^{\infty}_c(\SL_2(\Q_l)\dslash\SL_2(\Z_l))
\] 
and $\mathcal{H}(K^p)^{\ram}=C^\infty_c(\SL_2(\A_{S(K^p)})\dslash K_S)$. 

For $R$ equal to either $\cO, \cO/\varpi^s \cO$ for some $s>0$ or to $E$ and $i\geq 0$, define 
\[H^i(K^p,R):=\varinjlim_{K_p} H^i(Y(K_pK^p),R),
\]
where the direct limit runs over all compact open subgroups $K_p \subset \SL_2(\Q_p)$ and as above
\[Y(K_pK^p)= \SL_2(\Q)\backslash \SL_2(\A)/K_pK^p \mathrm{SO}_2(\R)
\]
is the symmetric space of level $K_pK^p$. Recall that the completed cohomology of tame level $K^p$ is defined as
\[\til{H}^i(K^p,\cO):=\varprojlim_s H^i(K^p,\cO/\varpi^s \cO) = \varprojlim_s \varinjlim_{K_p} H^i(Y(K_pK^p),\cO/\varpi^s \cO).
\]
Then
\[\til{H}^i(K^p):= \til{H}^i(K^p,\cO) \otimes_\cO E
\]
is an $E$-Banach space equipped with an action of $\cH({K^p})$ as well as an admissible continuous action of the locally analytic group $\SL_2(\Q_p)$ in the sense of \cite{emerton2}.
Taking the limit
\[\til{H}^i:= \varinjlim \til{H}^i(K^p),
\]
we get a locally convex topological $E$-vector space together with an admissible continuous action of $\SL_2(\A_f)$.
We recover the previous space by taking $K^p$-fixed vectors:
\[(\til{H}^i)^{K^p}= \til{H}^i(K^p).\]

The only interesting cohomology index for the group $\SL_2/\Q$ is $i=1$. As for $K_p$ small enough the $Y(K_pK^p)$ are connected open Riemann surfaces, the cohomology groups $H^i(K^p,R)$ vanish for all $i \geq 2$. Furthermore $H^0(K^p,R) \cong R$.
For any inclusion $K_p \subset K_p'$ of compact open subgroups of $\SL_2(\Q_p)$, the transition maps 
\[H^0(Y(K'_pK^p),R)\cong R \rightarrow H^0(Y(K_pK^p),R) \cong R\]
are the identity, therefore 
\[\til{H}^0(K^p,\cO) \cong \cO.\]
Furthermore both $\til{H}^0(K^p,\cO)$ and $\til{H}^1(K^p,\cO)$ agree with the $p$-adic completions
\[\widehat{H}^i(K^p,\cO) := \varprojlim_s H^i(K^p,\cO)/\varpi^s,
\]
and are $\cO$-torsion free. 

The abstract ``unramified'' Hecke algebra over $\cO$, i.e., the polynomial algebra 
\[\mathbb{H}:=\cO[t_l:l\notin S(K^p)\cup \{p\}]
\]
acts on the spaces $H^1(Y(K_pK^p),\cO)$.  

Let $\til{\mathbb{H}}(K_p)$ be the image of $\mathbb{H}$ in the endomorphism ring of $H^1(Y(K_pK^p),\cO)$, equipped with its $p$-adic topology.
The inverse limit 
\[\til{\mathbb{H}}:= \varprojlim_{K_p} \til{\mathbb{H}}(K_p),
\]
equipped with the projective limit topology is a semi-local and noetherian ring, in particular, it has only finitely many maximal ideals. It acts faithfully on $\til{H}^1(K^p,\cO)$ and we have decompositions
\begin{equation}\label{dec1}
\til{H}^1(K^p,\cO) = \bigoplus_{\mathfrak{m}\in \mathrm{MaxSpec}(\til{\mathbb{H}})}\til{H}^1(K^p,\cO)_{\mathfrak{m}}
\end{equation}
and 
\[\til{H}^1(K^p) = \bigoplus_{\mathfrak{m}}\til{H}^1(K^p)_{\mathfrak{m}}\]
where in the last line  
\[\til{H}^1(K^p)_{\mathfrak{m}} := \til{H}^1(K^p,\cO)_{\mathfrak{m}}\otimes_\cO E.
\]

The eigenvarieties we are working with are defined relative to the so called weight space: Let $T\subset \SL_2(\Q_p)$ be the diagonal torus and let $\widehat{T}$ be the weight space as in \cite[Section 6.4]{emerton2}.
This is the rigid analytic space over $E$ that parametrizes the locally $\Q_p$-analytic characters of the group $T$, i.e., for any affinoid $E$-algebra $A$ we have 
\[ \widehat{T}(A)=\Hom_{la}(T(\Q_p),A^*).
\]

Emerton's eigenvarieties $p$-adically interpolate systems of Hecke eigenvalues coming from cohomological automorphic representations. 
 
Let $W=\Sym^k(E^2)$ be the algebraic representation over $E$ of $\SL_2$ of highest weight $k$. Note that $W$ is self-dual. 
Define 
\[H^1(K^p,W)=\varinjlim_{K_p} H^1(Y(K_pK^p),\mathcal{W}),
\]
where $\mathcal{W}$ is the local system on $Y(K_pK^p)$ attached to $W$. 
Let 
\[H^1(W)=\varinjlim_{K_f} H^1(Y(K_f),\mathcal{W}),
\]
where the direct limit runs over all compact open subgroups $K_f \subset \SL_2(\A_f)$.  

Let $\pi_f$ be an irreducible $\SL_2(\A_f)$-representation appearing as a subquotient of 
$H^1(W)\otimes_E \Qbar_p$ and such that $\pi_p$ embeds into the parabolic induction $\Ind_B^{\SL_2(\Q_p)}(\chi)$ from the Borel subgroup $B$ of upper triangular matrices and for some smooth $\Qbar_p$-valued character $\chi$ of $T$. Assume $\pi_f^{K^p}\neq 0$. Since $\pi_f$ is irreducible the spherical Hecke algebra $\mathcal{H}(K^p)^{\ur}$ acts on $\pi_f^p$ via a $\Qbar_p$-valued character $\lambda$. Let $\psi_W$ denote the highest weight character of $W$ regarded as a character of $T$. The pair $(\chi \psi_W, \lambda)$ then defines a point of the locally ringed space $\hat{T}\times \Spec \mathcal{H}(K^p)^{\ur}$. Following Emerton we call such a point a classical point. This is justified as the representation $\pi_f$ giving rise to such a point is the finite part of an automorphic representation $\pi$ of $\SL_2$. We remark at this point that as we only work with algebraic automorphic representation the finite part $\pi_f$ of such a representation can always be defined over $\Qbar$ and in fact over a number field. 

In \cite{emerton}, Emerton constructs an eigenvariety of tame level $K^p$ for the $\cH(K^p)\times \SL_2(\Q_p)$ module $\til{H}^1(K^p)$. We can apply his construction to any of spaces 
\[\til{H}^1(K^p)_{\mathfrak{m}}\]
occurring in the decomposition (\ref{dec1}) above. The outcome of his construction is a possibly non-reduced eigenvariety $E^1(K^p,\mathfrak{m})$ equipped with a coherent sheaf $\cM^1$ of $\mathcal{H}(K^p)^{\ram}$-modules. We pass to the reduced space $E(K^p,\mathfrak{m}):=E^1(K^p,\mathfrak{m})^{red}$ and let $\cM$ be the pullback of $\cM^1$ to $E(K^p,\mathfrak{m})$. Emerton's construction furthermore provides us with an injective map of locally ringed spaces 
\[E(K^p,\mathfrak{m}) \rightarrow \hat{T}\times \Spec \mathcal{H}(K^p)^{\ur}.
\]    

\begin{thm}[\cite{emerton}, \cite{hill}]\label{evs} 
The eigenvariety $E(K^p,\mathfrak{m})$ is a reduced rigid analytic space which has the following properties.
\begin{enumerate}
	\item Projection onto the first factor induces a finite map $E(K^p,\mathfrak{m}) \rightarrow \hat{T}$.
	\item If $(\phi, \lambda)$ is a point on $E(K^p,\mathfrak{m})$ such that $\phi$ is locally algebraic and of non-critical slope (in the sense of \cite[Definition 4.4.3]{emerton-jacquet1}), then $(\phi,\lambda)$ is a classical point.
	\item The coherent sheaf $\mathcal{M}$ of $\mathcal{H}(K^p)^{\ram}$-modules over $E(K^p,\mathfrak{m})$ satisfies the following property. For any classical point $(\chi \psi_W,\lambda) \in E(K^p,\mathfrak{m})$ of non-critical slope, the fibre of $\mathcal{M}$ over the point $(\chi \psi_W, \lambda)$ is isomorphic as a $\mathcal{H}(K^p)^{\ram}$-module to the dual of the $(\chi,\lambda)$-eigenspace of the Jacquet module of the smooth representation $H^1(K^p,W)$.
\end{enumerate}
\end{thm}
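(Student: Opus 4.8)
The statement is Emerton's general eigenvariety formalism specialised to $\SL_2/\Q$, so the plan is to recall the construction of \cite{emerton} (as completed in \cite{hill}) and verify the three properties in turn. First I would apply Emerton's locally analytic Jacquet module functor $J_B$ of \cite{emerton-jacquet1} to the admissible continuous Banach representation $\til{H}^1(K^p)_{\mathfrak{m}}$ of $\SL_2(\Q_p)$. The output $J_B(\til{H}^1(K^p)_{\mathfrak{m}})$ is an essentially admissible locally $\Q_p$-analytic representation of the torus $T$ carrying a commuting action of $\mathcal{H}(K^p)$; by definition of essential admissibility its continuous dual is a coherent sheaf on $\hat{T}$, and since $\mathcal{H}(K^p)^{\ram}$ is finite over $E$ while $\mathcal{H}(K^p)^{\ur}$ acts through a finitely generated algebra, this dual is a coherent sheaf on $\hat{T}\times\Spec\mathcal{H}(K^p)^{\ur}$. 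Its scheme-theoretic support, endowed with the reduced structure, is $E(K^p,\mathfrak{m})$, and $\cM$ is the associated coherent $\mathcal{H}(K^p)^{\ram}$-module. Property $(1)$ is then formal: the support of a coherent sheaf on $\hat{T}\times\Spec\mathcal{H}(K^p)^{\ur}$ which is already coherent over $\hat{T}$ is finite over $\hat{T}$.

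For property $(2)$ I would invoke Emerton's classicality criterion. For a locally algebraic character $\phi=\chi\psi_W$, the $\phi$-eigenspace of $J_B(\til{H}^1(K^p))$ contains the image of the smooth Jacquet module $J_B^{\mathrm{sm}}$ of the space of locally algebraic vectors, which by Emerton's computation of locally algebraic vectors in completed cohomology is, up to the twist by $\psi_W$, the smooth Jacquet module of $H^1(K^p,W)=\varinjlim_{K_p}H^1(Y(K_pK^p),\mathcal{W})$. The non-critical slope hypothesis of \cite[Definition 4.4.3]{emerton-jacquet1} forces this inclusion to be an equality on the generalised $\phi$-eigenspace, via the adjunction between $J_B$ and locally analytic parabolic induction together with the vanishing of the relevant higher $\operatorname{Ext}$-groups in the non-critical range. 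Since by \eqref{par} the Jacquet-module eigenspaces of $H^1(K^p,W)$ are built out of (cuspidal, hence classical) automorphic representations of $\SL_2$, the point $(\phi,\lambda)$ is classical in Emerton's sense.

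Property $(3)$ refines this computation. Here I would use the natural $\SL_2(\Q_p)\times\mathcal{H}(K^p)$-equivariant comparison between the classical cohomology $H^1(K^p,W)$ and the completed cohomology — concretely, Emerton's spectral sequence relating $\varinjlim_{K_p}H^i(Y(K_pK^p),\mathcal{W})$ to $\til{H}^i(K^p)$. Because for $K_p$ small the $Y(K_pK^p)$ are open Riemann surfaces, $H^i$ vanishes for $i\geq 2$ and $H^0$ is trivial, so the spectral sequence collapses and, after passing to the $(\chi\psi_W,\lambda)$-part of non-critical slope, identifies the $\phi$-eigenspace of $J_B(\til{H}^1(K^p))$ with the $(\chi,\lambda)$-eigenspace of $J_B^{\mathrm{sm}}(H^1(K^p,W))$ (up to the twist by $\psi_W$), $\mathcal{H}(K^p)^{\ram}$-equivariantly. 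Dualising, and recalling that the fibre of $\cM$ at a point of $E(K^p,\mathfrak{m})$ is by construction dual to the corresponding eigenspace of $J_B(\til{H}^1(K^p))$, yields the stated description of $\cM_{(\chi\psi_W,\lambda)}$.

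The main obstacle is the content of $(2)$ and $(3)$: one must ensure that passing from the smooth to the locally analytic Jacquet module introduces no spurious non-classical vectors in the non-critical range, and that the classical-to-completed comparison is a genuine isomorphism on the relevant eigenspaces rather than a map with merely controlled kernel and cokernel. This is precisely where the non-critical slope condition and the one-dimensionality of the arithmetic quotients enter, and where the refinements of \cite{hill} to Emerton's construction are needed; the verification of $(1)$ and the bookkeeping of the two Hecke factors is, by contrast, routine.
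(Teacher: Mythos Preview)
Your approach is essentially the paper's: invoke Emerton's general construction (Theorem~0.7 of \cite{emerton}) and then use Hill's result to guarantee that the edge map $H^1(W)\otimes_E W \to \til{H}^1_{W\text{-lalg}}$ is an isomorphism. The paper's proof consists of exactly these two citations; you have unpacked the mechanism in more detail.

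Two small inaccuracies to correct. First, $\mathcal{H}(K^p)^{\ram}=C^\infty_c(\SL_2(\A_{S(K^p)})\dslash K_S)$ is \emph{not} finite over $E$; fortunately nothing in the construction requires this, since $\cM$ is coherent over the eigenvariety by virtue of the essential admissibility of $J_B(\til{H}^1(K^p)_\mathfrak{m})$ and the finiteness of the image of $\mathcal{H}(K^p)^{\ur}$, while the ramified Hecke algebra merely acts on $\cM$. Second, your argument that ``$H^i$ vanishes for $i\geq 2$ and $H^0$ is trivial, so the spectral sequence collapses'' is not quite enough: this controls the $\til{H}^j$-column but not the $\operatorname{Ext}^i$-row of Emerton's spectral sequence, and the injectivity/surjectivity of the edge map in degree~$1$ genuinely requires the analysis in \cite{hill}. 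You acknowledge this at the end, but the earlier sentence overstates what the cohomological-dimension argument alone buys.
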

\begin{proof} This follows from Theorem 0.7 of \cite{emerton}. The $\SL_2(\A_f)$-equivariant edge map 
\[H^1(W)\otimes_E W \rightarrow \til{H}^1_{W-\mathrm{lalg}}\]
from \cite{emerton} Corollary 2.2.18 and Remark 2.2.19 is an isomorphism by \cite{hill}. 
\end{proof}

For a classical point $z = (\chi\psi_W, \lambda) \in E(K^p,\mathfrak{m})(\Qbar_p)$ coming from a cuspidal automorphic representation $\pi(z)$ define the classical subspace  
\[\cM^{cl}_{\zbar} := \Hom ( J_B(H^1(K^p,W))^{(\chi, \lambda)}, k(z)) \subset \cM_{\zbar}.
\] 
By the results recalled in Section \ref{sec: background} we have 
\begin{eqnarray*}
&& J_B(H^1(K^p,W))^{(\chi, \lambda)}\otimes_{k(z),\iota}\C\\
 &\cong & J_B(\varinjlim_{K_p} H^1(K_pK^p,W)^\lambda)^{\chi} \otimes_{k(z),\iota}\C \\
&\cong & J_B(\varinjlim_{K_p} H_{par}^1(K_pK^p,W)^\lambda)^\chi \otimes_{k(z),\iota}\C \\
& \cong & \bigoplus_{\substack{\pi \ adm.\ rep. :\\ \pi_l \cong \pi(z)_l \\ \forall l \notin S(K^p) \cup \{p\}}} m(\pi) (\pi_{S(K^p)})^{K^\ram} \otimes J_B(\pi_p)^{\chi} \otimes H^1_{rel.Lie}(\mathfrak{g}, K_\infty, W_\C \otimes \pi_\infty).
\end{eqnarray*}
In the last line we have abused notation, e.g., as we have written $\chi$ instead of $\iota \circ \chi$. We will continue to suppress it from the notation when we change coefficients from $\Qbar_p$ to $\C$ using the isomorphism $\iota$ as it will always be clear from the context. 

In the next section we work with certain direct summands of the sheaf $\cM$. For that first note the following easy lemma.
\begin{lem}\label{cohsum} Let $\cM$ be a coherent $\mathcal{H}(K^p)^{ram}$-module on a rigid space $X$. Let $e \in \mathcal{H}(K^p)^{ram}$ be an idempotent. 
Then $e\cM$ is coherent.  
\end{lem}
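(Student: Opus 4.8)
The plan is to reduce to the affinoid case and use that coherence is a local property on a rigid space. So let $\{U_i\}$ be an admissible affinoid cover of $X$ by affinoids $U_i = \Sp A_i$. Since $\cM$ is coherent, each $\cM(U_i)$ is a finitely generated $A_i$-module, and $\cM(U_i)$ carries an $A_i$-linear action of $\mathcal{H}(K^p)^{ram}$ commuting with the $A_i$-module structure (this is what it means for $\cM$ to be a coherent $\mathcal{H}(K^p)^{ram}$-module on $X$). It suffices to show that $(e\cM)(U_i) = e \cdot \cM(U_i)$ is a finitely generated $A_i$-module, since then $e\cM$ is a coherent sheaf, and the $\mathcal{H}(K^p)^{ram}$-action restricts to it because $e$ is central in $e\mathcal{H}(K^p)^{ram}e$ acting on $e\cM$ (or simply: $e\cM$ is visibly stable under $\mathcal{H}(K^p)^{ram}$ as $e$ is idempotent, so $\mathcal{H}(K^p)^{ram}\cdot e\cM = e\mathcal{H}(K^p)^{ram}\cdot e\cM \subseteq e\cM$).

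First I would observe that, writing $M := \cM(U_i)$ for brevity, the idempotent $e$ gives a direct sum decomposition $M = eM \oplus (1-e)M$ of $A_i$-modules: indeed $eM$ and $(1-e)M$ are $A_i$-submodules because $e$ acts $A_i$-linearly, every $m \in M$ is written $m = em + (1-e)m$, and $eM \cap (1-e)M = 0$ since $v = em = (1-e)w$ forces $v = e v = e(1-e)w = 0$. Hence $eM$ is a quotient (equivalently a direct summand) of the finitely generated $A_i$-module $M$. Since $A_i$ is Noetherian, $eM$ is finitely generated over $A_i$.

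Finally I would note that the formation of $e\cM$ as a subsheaf of $\cM$ commutes with restriction to the $U_i$, so the local computations glue: $e\cM$ is a coherent sheaf of $\cO_X$-modules, and it is a direct summand of $\cM$ in the category of $\mathcal{H}(K^p)^{ram}$-modules on $X$. There is essentially no obstacle here; the only point requiring a moment's care is checking that the $\mathcal{H}(K^p)^{ram}$-action is compatible with the $A_i$-module structure on each affinoid, which is part of the hypothesis that $\cM$ is a coherent $\mathcal{H}(K^p)^{ram}$-module, and that the decomposition $M = eM \oplus (1-e)M$ is by $A_i$-submodules, which uses $A_i$-linearity of the $\mathcal{H}(K^p)^{ram}$-action.
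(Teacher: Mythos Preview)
Your proof is correct and follows the same idea as the paper: the idempotent $e$ yields a direct sum decomposition $\cM \cong e\cM \oplus (1-e)\cM$, and direct summands of coherent sheaves are coherent. The paper states this in one line, whereas you unpack the argument on affinoids; note also that your aside about $e\cM$ being $\mathcal{H}(K^p)^{\ram}$-stable is not needed for the lemma (which only asserts coherence as an $\cO_X$-module) and would require $e$ to be central as written.
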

\begin{proof} As $e$ is an idempotent we have $\cM \cong e\cM \oplus (1-e)\cM$, and direct summands of coherent sheaves are coherent.
\end{proof}

The idempotents we use arise as follows. The subgroups $K_{1}(m_l)$ and $K_0(m_l)$ of $\SL_2(\Z_l)$ were defined in Section \ref{sec: newforms} for any integer $m_l \geq 0$. Recall that any character 
\[\eta_l: \Z_l^*/(1+l^{m_l}\Z_l)\rightarrow \Qbar_p^*
\]
defines a character of $K_0(m_l)$, by
\[ \eta_l \left(\left(\begin{array}{cc}	a & b \\ c & d \end{array} \right)\right) = \eta_l(d).
\]

Now let $K^p \subset \SL_2(\A_f^p)$ be a tame level such that $K_l=K_1(m_l)$ for all $l \in S(K^p)$. Choose characters $\eta_l$ as above and let 
\[\eta:= \prod_{l \in S(K^p)}{\eta_l}: \prod_l K_0(m_l) \rightarrow \Qbar_p^*
\] 
be their product. 
Then if $E \subset \Qbar_p$ contains the values of $\eta$, there exists an idempotent $e_{\eta}$ in $\mathcal{H}(K^p)^{\ram}$ associated with the representation $\eta$ of $K_0^\ram:=\prod_{l \in S(K^p)} K_0(m_l)$. 
Concretely we choose the Haar measure on $\SL_2(\A_{S(K^p)})$ such that $\mu_l(K_0(m_l))= 1$ for all $l \in S(K^p)$. 
Then let $e_{\eta}:\SL_2(\A_{S(K^p)})\rightarrow E$ be defined as
\[ e_{\eta}(g)= 
\begin{cases} 
    \eta^{-1}(g),& \text{if } g \in K_0^\ram \\
    0,              & \text{otherwise.}
\end{cases}
\]  
One easily checks that this defines an idempotent and that for two distinct characters $\eta \neq \eta'$ the resulting idempotents are orthogonal. For any smooth representation $\pi$ of $\SL_2(\A_{S(K^p)})$ with coefficients in $\Qbar_p$, $e_{\eta}$ is the projection onto the space $\pi_\eta^{K_0^\ram}$.

In the next section we need some extra properties that the eigenvarieties $E(K^p,\mathfrak{m})$ satisfy, at least when the maximal ideal $\mathfrak{m}$ is suitably chosen. In the rest of this section we will explain and verify these properties.

Recall that a rigid space $X/\Sp(E)$ is called nested if it has an admissible cover by open affinoid subspaces $\{X_i\}_{i\geq 0}$ such that $X_i \subset X_{i+1}$ for all $i \geq 0$ and the natural $E$-linear map $\cO(X_{i+1}) \rightarrow \cO(X_i)$ is compact. 
\begin{lem} The eigenvariety $E(K^p, \mathfrak{m})$ is nested. In particular, 
\[\cO(E(K^p,\mathfrak{m}))^0:=\{f \in \cO(E(K^p,\mathfrak{m})) : |f(x)|\leq 1 \ \forall x \in E(K^p,\mathfrak{m})\}\] 
is compact.
\end{lem}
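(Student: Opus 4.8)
The plan is to deduce nestedness from the finiteness of the weight map recorded in Theorem \ref{evs}(1): the projection $f\colon E(K^p,\mathfrak{m})\to\widehat{T}$ is finite, so it suffices to know that $\widehat{T}$ is nested and that a rigid space finite over a nested space is again nested.

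That $\widehat{T}$ is nested I would see as follows. As $T\cong\mathbb{G}_m$ over $\Q_p$ we have $T(\Q_p)\cong\Q_p^*\cong p^{\Z}\times\Z_p^*$, so $\widehat{T}$ is a finite disjoint union of copies of the product of an open unit disc — parametrising the locally analytic characters of $\Z_p^*$ — with one copy of $\mathbb{G}_m^{\mathrm{rig}}$, recording the value of a character on a uniformiser. An open disc is the increasing union of the closed discs of radii $r_i\uparrow 1$, an open annulus the increasing union of closed subannuli, and in both cases the restriction maps on functions are compact; since products and finite disjoint unions of nested spaces are clearly nested, $\widehat{T}$ is nested. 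Fix a nested admissible affinoid cover $\widehat{T}=\bigcup_{i\ge0}\widehat{T}_i$.

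Now put $X_i:=f^{-1}(\widehat{T}_i)$. Since $f$ is finite each $X_i$ is affinoid, and as $\{\widehat{T}_i\}$ is an admissible cover with $\widehat{T}_i\subset\widehat{T}_{i+1}$ the $X_i$ form an increasing admissible affinoid cover of $E(K^p,\mathfrak{m})$. The one substantive point — and the step I expect to be the main obstacle — is that the transition map $\cO(X_{i+1})\to\cO(X_i)$ is compact. For this I would use that, $f$ being finite and $\widehat{T}_i\subset\widehat{T}_{i+1}$, one has $X_i=X_{i+1}\times_{\widehat{T}_{i+1}}\widehat{T}_i$, hence $\cO(X_i)\cong\cO(X_{i+1})\,\widehat{\otimes}_{\cO(\widehat{T}_{i+1})}\,\cO(\widehat{T}_i)$, so the transition map is obtained from the compact map $\cO(\widehat{T}_{i+1})\to\cO(\widehat{T}_i)$ by base change along the finite $\cO(\widehat{T}_{i+1})$-algebra $\cO(X_{i+1})$. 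Picking a surjection $\cO(\widehat{T}_{i+1})^{\oplus n}\twoheadrightarrow\cO(X_{i+1})$ of Banach modules and base changing yields a commutative square whose vertical arrows are admissible surjections and whose top arrow $\cO(\widehat{T}_{i+1})^{\oplus n}\to\cO(\widehat{T}_i)^{\oplus n}$ is compact; since the image of a bounded set under a compact map is relatively compact and continuous images of relatively compact sets are relatively compact, the bottom arrow $\cO(X_{i+1})\to\cO(X_i)$ is compact as well. This establishes that $E(K^p,\mathfrak{m})$ is nested.

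Finally, for the ``in particular'' I would argue by a Tychonoff compactness trick. Write $A:=\cO(E(K^p,\mathfrak{m}))=\varprojlim_i\cO(X_i)$, a Fréchet algebra for the inverse-limit topology, and observe that $A^0=\varprojlim_i\cO(X_i)^{\le 1}$ inside $\prod_i\cO(X_i)$, where $\cO(X_i)^{\le 1}$ denotes the unit ball for the supremum norm, because a global function is bounded by $1$ on all of $E(K^p,\mathfrak{m})$ exactly when its restriction to each $X_i$ is. For every $i$ the image of $\cO(X_{i+1})^{\le 1}$ under the compact restriction map has compact closure $C_i\subset\cO(X_i)$, and every element of $A^0$ therefore lies in $\prod_{i\ge0}C_i$; this product is compact by Tychonoff, and $A^0$ is closed in $\prod_i\cO(X_i)$, hence closed in $\prod_i C_i$, hence compact. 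Everything here is formal once one has the finiteness of the weight map and the nestedness of $\widehat{T}$; the only genuine work is the compactness of the transition maps $\cO(X_{i+1})\to\cO(X_i)$.
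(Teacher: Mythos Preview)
Your argument is correct and follows exactly the same strategy as the paper: show $\widehat{T}$ is nested, then use finiteness of the weight map to conclude $E(K^p,\mathfrak{m})$ is nested, then deduce compactness of $\cO^0$. The paper simply cites \cite[Lemma 7.2.11]{BC} for the two substantive steps (finite over nested is nested; nested reduced implies $\cO^0$ compact), whereas you unpack those arguments by hand.

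One omission worth flagging: in your Tychonoff argument you implicitly use that the supremum-seminorm unit ball $\cO(X_{i+1})^{\le 1}$ is bounded for the Banach topology on $\cO(X_{i+1})$, so that compactness of the transition map applies to it. This needs $X_{i+1}$ reduced (otherwise nilpotents of arbitrary Banach norm lie in $\cO(X_{i+1})^{\le 1}$). The paper records this explicitly (``As $E(K^p)$ is reduced this implies the compactness assertion''), and indeed $E(K^p,\mathfrak{m})$ is reduced by construction; you should say so.
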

\begin{proof} The space $\hat{T} \cong \Hom_{la}(T(\Z_p), \mathbb{G}_m) \times \mathbb{G}_m$ is a product of nested spaces, therefore nested. The map $E(K^p) \rightarrow \hat{T}$ is finite. By \cite[Lemma 7.2.11]{BC} this implies that $E(K^p)$ is also nested. As $E(K^p)$ is reduced this implies the compactness assertion, again by \cite[Lemma 7.2.11]{BC}.  
\end{proof}

\begin{defn}
Define the ideal $\mathfrak{m}_{0}$ to be the kernel of the map
\[\mathbb{H} \rightarrow \mathbb{F}_q\]
that sends $t_l$ to $l^2+l$. 
\end{defn}
\begin{rem}
This ideal comes from the fact that $t_l \in \mathbb{H}$ acts on $H^0(K^p,\cO) \cong \cO$ by $x \mapsto (l^2+l)x$, in particular 
\[ H^0(K^p,\cO)_\mathfrak{m} = 0,
\]
for all $\mathfrak{m} \in \mathrm{MaxSpec}(\mathbb{H}), \mathfrak{m} \neq \mathfrak{m}_0$. 
\end{rem}

For an open subgroup $H\subset \SL_2(\Q_p)$ let $\mathcal{C}^{la}(H,E)$ denote the locally analytic $E$-valued functions on $H$. 
\begin{lem}\label{inj} Let $\mathfrak{m} \in \mathrm{MaxSpec}(\mathbb{H}), \mathfrak{m}\neq \mathfrak{m}_{0}$ be a maximal ideal with $\til{H}^1(K^p,\cO)_{\mathfrak{m}} \neq 0$. There is a compact open subgroup $H \subset \SL_2(\Q_p)$ such that 
\[ (\til{H}^1(K^p,\cO)_{\mathfrak{m}})_{la}\stackrel{\sim}{\rightarrow} \mathcal{C}^{la}(H,E)^r,
\]
for some $r \geq 1$ as representations of $H$. 
\end{lem}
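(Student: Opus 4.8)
The plan is to reduce the statement to a freeness result for completed cohomology as a module over a (completed) Iwasawa or group algebra, and then invoke the structure theory of admissible continuous representations of compact $p$-adic groups. First I would use that $\til H^1(K^p,\cO)$ is $\cO$-torsion free (established above) and agrees with the $p$-adic completion $\widehat H^1(K^p,\cO)$; the same therefore holds for the localization $\til H^1(K^p,\cO)_\mathfrak{m}$, which is a direct summand by the decomposition \eqref{dec1}. Since the $Y(K_pK^p)$ are connected open Riemann surfaces for $K_p$ small, the cohomology vanishes in degrees $\geq 2$ and $H^0$ is just $\cO$; so for the chosen tame level and for $K_p$ sufficiently small the complex computing cohomology has no $H^2$, and $\til H^1(K^p,\cO)$ is, up to the constants $\til H^0(K^p,\cO)\cong\cO$, the only cohomology. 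The point of excluding $\mathfrak{m}_0$ is precisely that it kills the contribution of $\til H^0$ (the constants), so that $\til H^1(K^p,\cO)_\mathfrak{m}$ for $\mathfrak m\neq\mathfrak m_0$ "sees only" the interesting part and has no extra zeroth-degree correction.

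Next I would appeal to the general machinery (Emerton, Calegari--Emerton) describing completed cohomology of a locally symmetric space attached to a group whose associated symmetric space has no compact factors in the relevant degree range: after localizing away from the trivial-type ideal and passing to a small enough compact open $H\subset\SL_2(\Q_p)$, the $\cO$-module $\til H^1(K^p,\cO)_\mathfrak m$ becomes a finitely generated \emph{free} module over the Iwasawa algebra $\cO[[H]]$. Concretely, one takes a finite free resolution of the (localized, completed) cochain complex by $\cO[[H]]$-modules; because there is no $H^0$-contribution and no $H^2$, the long exact sequence forces $\til H^1(K^p,\cO)_\mathfrak m$ itself to be $\cO[[H]]$-free of some finite rank $r\geq 1$ (nonzero by the hypothesis $\til H^1(K^p,\cO)_\mathfrak m\neq 0$). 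Dualizing and inverting $p$, Schneider--Teitelbaum duality turns $\cO[[H]]$-freeness into the assertion that $\til H^1(K^p)_\mathfrak m$ is, as an admissible Banach representation of $H$, isomorphic to $\mathcal C(H,E)^r$ (continuous functions); passing to locally analytic vectors then yields $(\til H^1(K^p,\cO)_\mathfrak m)_{la}\xrightarrow{\sim}\mathcal C^{la}(H,E)^r$ as $H$-representations, which is exactly the claim. The case $r\geq 1$ is guaranteed since the module is nonzero.

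The main obstacle I expect is making the "no $H^2$, hence freeness" argument genuinely rigorous at integral level with the localization at $\mathfrak m$ in place: one must ensure the resolution can be chosen $\mathbb H$-equivariantly (or at least compatibly with the $\mathfrak m$-localization), and one must control torsion phenomena — a priori completed $H^1$ could have a finite $\cO[[H]]$-torsion submodule coming from torsion in $H^2$ of the finite-level spaces $Y(K_pK^p)$ with $\cO/\varpi^s$ coefficients. The open-Riemann-surface vanishing $H^2(Y(K_pK^p),\cO/\varpi^s)=0$ is what rescues this: it forces the relevant $\mathrm{Tor}$ and $H^2$ terms to vanish, so no torsion is introduced and freeness is clean. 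A secondary technical point is choosing $H$ small enough that $\cO[[H]]$ is a regular (in fact a power series) ring so that "finitely generated + right homological dimension" genuinely gives free rather than merely projective; this is standard (shrink $H$ to be uniform pro-$p$). Once these two points are dispatched, the duality dictionary of Schneider--Teitelbaum and Emerton gives the stated isomorphism directly.
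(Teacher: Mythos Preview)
Your proposal is correct and follows essentially the same route as the paper. The paper's proof cites Emerton's local-global compatibility paper directly: it reduces (via Corollary 5.3.19 there) to showing that $H^1(K^p,\cO/\varpi^s\cO)_{\mathfrak m}$ is an injective smooth $H$-representation for all $s$, and then proves this as in Proposition 5.3.15 of the same reference, the hypothesis $\mathfrak m\neq\mathfrak m_0$ being used exactly to kill the $H^0$-terms. Your formulation via $\cO[[H]]$-freeness and Schneider--Teitelbaum duality is the Pontryagin-dual rephrasing of the same argument (injectivity of the smooth mod-$\varpi^s$ representation is equivalent to projectivity, hence freeness for $H$ uniform, of its dual over the Iwasawa algebra), and the cohomological inputs you isolate --- vanishing of $H^2$ for open Riemann surfaces and the elimination of $H^0$ after localizing away from $\mathfrak m_0$ --- are precisely those used in the paper.
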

\begin{proof} This is proved as in \cite{emertonlg}. We have $\widehat{H}^1(K^p,\cO) \cong \til{H}^1(K^p,\cO)$, and 
\[\widehat{H}^1(K^p,\cO)_{\mathfrak{m}}/\varpi^s \widehat{H}^1(K^p,\cO)_{\mathfrak{m}} \cong H^1(K^p, \cO/\varpi^s \cO)_{\mathfrak{m}}.
\]
By the proof of Corollary 5.3.19 in \cite{emertonlg} it suffices to show that we can find an open subgroup $H$ such that $H^1(K^p, \cO/\varpi^s \cO)_{\mathfrak{m}}$ is injective for all $s\geq 1$. This last claim can be proved in the same way as Proposition 5.3.15 in \cite{emertonlg}. Our assumption $\mathfrak{m} \neq \mathfrak{m}_0$ implies that all the relevant $H^0$-terms vanish.
\end{proof}

\begin{lem}\label{zda} Let $\mathfrak{m} \neq \mathfrak{m}_0$ be a maximal ideal in $\mathbb{H}$ as above. The eigenvariety $E(K^p,\mathfrak{m})$ is equidimensional of dimension one. It contains a Zariski-dense set of classical points, which accumulates at any of its points. 
\end{lem}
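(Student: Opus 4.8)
The plan is to deduce both assertions from the general machinery of Emerton's eigenvariety construction combined with the structural results already available for $E(K^p,\mathfrak{m})$. First I would establish equidimensionality of dimension one. The projection $E(K^p,\mathfrak{m}) \to \hat{T}$ is finite by Theorem \ref{evs}(1), and $\hat{T}$ has dimension two (it is $\Hom_{la}(T(\Z_p),\mathbb{G}_m) \times \mathbb{G}_m$, and $T(\Z_p) \cong \Z_p^*$ is one-dimensional, so $\hat T$ has dimension $1+1=2$). Wait --- for $\SL_2$ the torus $T$ is one-dimensional, so $\hat T$ is $1$-dimensional, matching the claim. Equidimensionality of dimension one then follows from the spectral-variety construction: $E(K^p,\mathfrak{m})$ is cut out inside a Fredholm hypersurface over the one-dimensional weight space $\hat T$ (via Lemma \ref{inj}, which identifies the localized completed cohomology with a finite sum of copies of $\mathcal{C}^{la}(H,E)$, so that the associated Jacquet module / locally analytic vectors are ``co-free'' in the sense needed to run Emerton's or Chenevier's eigenvariety formalism), and Fredholm hypersurfaces over an equidimensional base are equidimensional of the same dimension; no component can collapse to a point because the map to $\hat T$ is finite hence has zero-dimensional fibres, and the eigenvariety is non-empty since $\til H^1(K^p,\cO)_\mathfrak{m}\neq 0$.

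Next, for Zariski-density and accumulation of classical points, the standard argument (as in \cite{emerton}, or \cite{BC} Ch.~7) is: classical points of non-critical slope correspond to eigenvalues occurring in the $\hat T$-eigenspaces of Jacquet modules of the algebraic representations $H^1(K^p,W)$ as $W = \Sym^k$ varies, twisted by finite-order characters of $T(\Z_p)$. Using Lemma \ref{inj}, the localized completed cohomology looks locally like $\mathcal{C}^{la}(H,E)^r$; feeding this into Emerton's spectral sequence / the Jacquet-module functor of \cite{emerton-jacquet1} shows that the locally algebraic vectors of weight $W$ are ``large'', so that for a Zariski-dense set of weights $\chi\psi_W \in \hat T$ (namely those of the form: a classical algebraic weight $k$ twisted by a locally constant character, with small enough $p$-part to force non-critical slope) the fibre of $E(K^p,\mathfrak{m})$ over that weight contains a classical point. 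Since such weights are Zariski-dense in $\hat T$ and accumulate at every point of $\hat T$, and the eigenvariety map to $\hat T$ is finite and surjective onto a union of irreducible components, one concludes that the classical points are Zariski-dense in $E(K^p,\mathfrak{m})$ and accumulate at each of its points. The non-criticality is where the hypothesis $\mathfrak{m}\neq\mathfrak{m}_0$ (vanishing of the $H^0$-terms) is again used, so that the only contributions come from $H^1$, which is the cuspidal/parabolic part and behaves uniformly in families.

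The main obstacle I expect is verifying that the eigenvariety is genuinely \emph{equidimensional} of dimension one, rather than merely of dimension $\leq 1$: this requires knowing that the completed cohomology is, after localization at $\mathfrak{m}\neq\mathfrak{m}_0$, ``faithfully flat enough'' over the weight algebra that no component of the spectral variety degenerates --- precisely the content that Lemma \ref{inj} is designed to supply (local freeness of the relevant module over the Iwasawa algebra), so the proof will invoke that lemma and then cite the equidimensionality criterion for eigenvarieties built from such modules (e.g.\ \cite[\S 7]{BC}, or Emerton's criterion in \cite{emerton}). The density/accumulation assertion is comparatively routine once equidimensionality and the local structure are in hand, being a direct application of Emerton's classicality results together with the density of appropriate weights in $\hat T$.
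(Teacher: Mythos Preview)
Your overall strategy is correct and matches the paper's approach: use Lemma~\ref{inj} to feed the localized completed cohomology into Emerton's machinery (specifically \cite[Prop.~4.2.36]{emerton-jacquet1}), obtain a finite map from the eigenvariety to a spectral variety (Fredholm hypersurface) attached to the compact operator $u_p=\left(\begin{smallmatrix}p & \\ & p^{-1}\end{smallmatrix}\right)$, and then deduce equidimensionality and density/accumulation of classical points from the corresponding properties of the spectral variety over weight space, citing \cite[\S 6]{chenevier} (or \cite[\S 7]{BC}) for the endgame. The paper also points to the proof of \cite[Cor.~4.1]{newton2} as a template.

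There is, however, a concrete error in your dimension count. The space $\hat{T}$ is \emph{two}-dimensional, not one-dimensional: one has $\hat{T}\cong\hat{T}_0\times_E\mathbb{G}_m$ where $\hat{T}_0:=\Hom_{la}(T(\Z_p),\mathbb{G}_m)$ is the one-dimensional ``weight space'' and the extra $\mathbb{G}_m$ records the value of a character on the uniformizer. Consequently your sentence ``$\hat{T}$ is $1$-dimensional, matching the claim'' and the later claim that the map to $\hat{T}$ is ``surjective onto a union of irreducible components'' are both wrong. The correct picture, which the paper makes explicit, is that the eigenvariety is finite over the spectral variety $Z\hookrightarrow\hat{T}_0\times\mathbb{G}_m$; since $Z$ is a Fredholm hypersurface over the one-dimensional base $\hat{T}_0$, it is equidimensional of dimension one, and hence so is the eigenvariety. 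Likewise, the local structure giving density and accumulation is a finite surjective map from small affinoid neighbourhoods in $E(K^p,\mathfrak{m})$ down to opens in $\hat{T}_0$ (not $\hat{T}$), and one then uses that the set of classical algebraic weights $\{k\geq 2\}$ is Zariski-dense and accumulation in $\hat{T}_0$. Once you replace $\hat{T}$ by $\hat{T}_0$ at the appropriate points, your argument is essentially the paper's.
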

\begin{proof} Define $\widehat{T}_0:= \Hom_{la}(T(\Z_p),\mathbb{G}_m)$, \footnote{This is the weight space considered in the work of Coleman and Buzzard.} then $\widehat{T}\cong \widehat{T}_0\times_E \mathbb{G}_m$. Write $\widehat{T}_0$ as an increasing union of affinoid opens $\Sp(A_n)$, $n\geq 0$. 
 
Lemma \ref{inj} allows us to apply the results from the proof of Proposition 4.2.36 in \cite{emerton-jacquet1}. 
In particular, we get that locally over $\Sp(A_n) \subset \widehat{T}_0$, there is a finite map  
\[E(K^p,\mathfrak{m})_n \rightarrow Z_n\]      
from the eigenvariety $E(K^p,\mathfrak{m})_n:= E(K^p,\mathfrak{m}) \times_{\widehat{T}_0} \Sp(A_n)$ to the spectral variety $Z_n \hookrightarrow \mathbb{G}_m \times_E \Sp(A_n)$ attached to the operator $u_p:=\left(^p \ _{p^{-1}}\right)\in T(\Q_p)$ which acts compactly on the respective Banach space. We refer to the proof of Corollary~4.1 in \cite{newton2}, where a similar argument is explained nicely in the context of $\GL_2$. In fact reinterpreting all the $\GL_2$-specific notation occurring there in terms of $\SL_2$ and replacing the representation $V$ by  $\til{H}^1(K^p)_\mathfrak{m}$, everything goes through. 
In particular, for every point on $E(K^p,\mathfrak{m})$ we may find an arbitrary small open affinoid neighbourhood $U$ and an open subspace $V \subset \widehat{T}_0$ such that the map 
\[U \rightarrow V\] 
obtained as the restriction of $E(K^p,\mathfrak{m})\rightarrow \widehat{T}_0$ to $U$, is finite and surjective when restricted to each irreducible component.
We may therefore deduce the Zariski-density and accumulation property as in \cite[Section 6]{chenevier} from the fact that the classical weights $\{k\geq 2\}\subset \widehat{T}_0$ are Zariski-dense and accumulation in weight space $\widehat{T}_0$. 
\end{proof}

From Theorem \ref{evs} above we get a morphism $\psi: \mathcal{H}(K^p)^{\ur} \rightarrow \cO(E(K^p,\mathfrak{m}))$. Abbreviate $S:=S(K^p)$. 
Let $t_l \in \mathcal{H}(K^p)^{\ur}$ be the characteristic function on the double coset
\[
\SL_2(\widehat{\Z}^S) \left(\begin{array}{cc} l & 0\\0 & l^{-1}\end{array}\right) \SL_2(\widehat{\Z}^S),
\]
where $\left(\begin{smallmatrix} l& \\ & l^{-1} \end{smallmatrix}\right)$ is understood to be the matrix in $\SL_2(\widehat{\Z}^S)=\prod_{q\notin S} \SL_2(\Z_q)$ which is equal to $1$ for all $q \neq l$ and equal to $\left(\begin{smallmatrix} l& \\ & l^{-1} \end{smallmatrix}\right)$ at $l$. 
 
\begin{lem} Let $E(K^p,\mathfrak{m})$ be the eigenvariety of tame level $K^p$. Let $G_{\Q,S} $ be the Galois group of a maximal extension of $\Q$ unramified outside $S$. 
There exists a three dimensional pseudorepresentation 
\[ T: G_{\Q,S} \rightarrow \mathcal{O}(E(K^p,\mathfrak{m}))
\]
such that $T(\operatorname{Frob}_l) = \psi\left(\frac{1}{l}(t_l+1)\right)$, for all $l \notin S$.
\end{lem}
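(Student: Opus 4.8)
The plan is to construct the pseudorepresentation by interpolating the ones attached to classical points, using the Zariski-density and accumulation statement of Lemma~\ref{zda} together with a standard argument à la Chenevier. First I would recall that to a cuspidal automorphic representation $\pi$ of $\SL_2/\Q$ contributing to $H^1_{\mathrm{par}}(K,W_\C)$ one can attach a three-dimensional Galois representation: namely, $\pi$ comes from a cuspidal automorphic representation $\til\pi$ of $\GL_2/\Q$ by restriction, and the symmetric square lift $\operatorname{Sym}^2\til\pi$ (an automorphic representation of $\GL_3/\Q$, cf. the local picture in Section~4) has an associated Galois representation $\rho_{\til\pi}: G_{\Q,S} \to \GL_3(\Qbar_p)$, self-dual with trivial determinant, unramified outside $S$. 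The point is that the system of Hecke eigenvalues of $\pi$ under $\mathcal{H}(K^p)^{\ur}$ determines, and is determined by, the unramified local components of $\operatorname{Sym}^2\til\pi$; concretely, at $l \notin S$ the operator $\frac1l(t_l+1)$ records $\operatorname{tr}\rho_{\til\pi}(\operatorname{Frob}_l)$. So at each classical point $z \in E(K^p,\mathfrak{m})$ we have a pseudorepresentation $T_z = \operatorname{tr}\rho_{\til\pi(z)}$ of dimension three with $T_z(\operatorname{Frob}_l) = \psi\bigl(\tfrac1l(t_l+1)\bigr)(z)$.

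Next I would globalize. By Lemma~\ref{zda} the classical points are Zariski-dense in the reduced space $E(K^p,\mathfrak{m})$, and by the previous lemma $\cO(E(K^p,\mathfrak{m}))^0$ is compact (equivalently, the classical points, or rather their images, lie in a space whose ring of functions bounded by $1$ is compact). For each $l \notin S$ the function $\psi\bigl(\tfrac1l(t_l+1)\bigr)$ lies in $\cO(E(K^p,\mathfrak{m}))$, and its values at classical points are traces of Frobenius of genuine three-dimensional representations; hence they satisfy the polynomial identities (in the $T(g)$, $g \in G_{\Q,S}$) cutting out pseudorepresentations of dimension three. Since these identities are closed conditions and hold on a Zariski-dense set, and since $\cO(E(K^p,\mathfrak{m}))$ is reduced, the map $g \mapsto \bigl(z \mapsto T_z(g)\bigr)$ — a priori only defined on Frobenii and extended by continuity using the Chebotarev density theorem and the compactness/noetherianity properties — extends to a continuous function $T: G_{\Q,S} \to \cO(E(K^p,\mathfrak{m}))$ satisfying the axioms of a pseudorepresentation of dimension $3$. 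This is exactly the standard interpolation argument; I would cite the relevant statement from \cite{chenevier} (or \cite{BC}) verbatim, checking that its hypotheses — reducedness, nestedness, density of classical points carrying Galois representations of bounded dimension — are all supplied by Lemmas~\ref{zda} and the nestedness lemma above.

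The main obstacle, and the only genuinely $\SL_2$-specific input, is the existence and normalization of the three-dimensional Galois representation at the classical points: one must check that the Hecke eigenvalue $\psi\bigl(\tfrac1l(t_l+1)\bigr)$ really is the trace of $\operatorname{Frob}_l$ on $\operatorname{Sym}^2$ of the $\GL_2$-representation attached to $\til\pi(z)$, where $\til\pi(z)$ is any $\GL_2$-representation whose restriction to $\SL_2$ contains $\pi(z)$. This is where the symmetric square lifting of Section~4 enters: the lift depends only on $\til\pi$ up to twist, hence only on the $L$-packet $\Pi(z)$, so $\operatorname{Sym}^2\til\pi(z)$ and thus $\rho_{\til\pi(z)}$ is well-defined independently of the choice of $\til\pi(z)$, and the local computation in Case (I) there (unramified principal series $\Ind(\mu_1,\mu_2)$ lifting to $\Ind_B(\mu_1\mu_2^{-1},1,\mu_2\mu_1^{-1})$) gives precisely $\operatorname{tr}\operatorname{Frob}_l = \mu_1\mu_2^{-1}(l) + 1 + \mu_2\mu_1^{-1}(l)$, which one identifies with $\tfrac1l(t_l+1)$ after matching the Satake parameters of $\pi_l^0$ against the eigenvalues of $t_l$ (using Lemma~\ref{jacdim} and the explicit double-coset description of $t_l$). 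Once this compatibility is pinned down at a dense set of points, the rest is the formal interpolation machinery and presents no difficulty.
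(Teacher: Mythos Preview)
Your proposal is correct and takes essentially the same approach as the paper: interpolate three-dimensional pseudorepresentations from the Zariski-dense set of classical points via Chenevier's machinery (the paper cites \cite[Prop.~7.1.1]{chenevier} directly), using nestedness and Lemma~\ref{zda} as the structural inputs. The paper is terser---it outsources the classical-point construction to \cite[Lemma~2.2]{p-adicpackets} rather than spelling out the symmetric-square compatibility, and it makes explicit one point you gloss over, namely that $\psi\bigl(\tfrac{1}{l}(t_l+1)\bigr)$ lies in $\cO(E(K^p,\mathfrak{m}))^0$ (not just in $\cO$), which follows from the Hecke action being defined via correspondences on integral cohomology and is needed for Chenevier's interpolation to apply.
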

\begin{proof} We follow the strategy of Proposition 7.1.1 in \cite{chenevier}. The fact that 
\[\psi\left(\frac{1}{l}(t_l+1)\right) \in \cO(E(K^p,\mathfrak{m}))^0
\] 
comes from the definition of the action of $\cH(K^p)^{ur}$ on $\til{H}^1(K^p)_\mathfrak{m}$ via correspondences. Then Lemma \ref{zda} implies that we may apply \cite[Prop.7.1.1]{chenevier} to the Zariski-dense subset of classical points and the three-dimensional pseudorepresentations that are described in the proof of Lemma 2.2 of \cite{p-adicpackets}.
\end{proof}

\begin{lem} Let $T$ be a pseudorepresentation on $E(K^p,\mathfrak{m})$. Then for $l \in S$, $T|_{I_l}$ is constant on connected components of $E(K^p,\mathfrak{m})$.
\end{lem}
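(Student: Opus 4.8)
\emph{Plan.} The idea is to reduce the assertion to an elementary statement about rigid analytic functions, using the interpolation property of $T$ together with the fact that the tame level at $l$ is fixed. Recall that, by construction, over the Zariski-dense set of classical points of $E(K^p,\mathfrak{m})$ (Lemma \ref{zda}) the pseudorepresentation $T$ interpolates the traces of the three-dimensional Galois representations attached to cuspidal automorphic representations of $\SL_2$: for a classical point $z$ coming from $\pi(z)$, one has $T_z = \operatorname{tr}\rho_z$, where $\rho_z\colon G_{\Q,S}\to\GL_3(\Qbar_p)$ is the Galois representation attached to the symmetric square (Gelbart--Jacquet) lift of $\pi(z)$ considered in Section 4. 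Since a pseudorepresentation is determined by its values and ``$T|_{I_l}$ is constant on connected components'' means precisely that $T(\sigma)\in\cO(E(K^p,\mathfrak{m}))$ is constant on connected components for every $\sigma\in I_l$, it suffices to prove the latter for a fixed $\sigma\in I_l$.

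First I would bound the ramification at $l$ at classical points. Since $l\in S=S(K^p)$ we have $K_l=K_1(m_l)$ for a fixed integer $m_l$, so for any classical point $z$ the local representation $\pi(z)_l$ satisfies $\pi(z)_l^{K_1(m_l)}\neq 0$; by the last observation of Section \ref{sec: newforms} the conductor of the local $L$-packet of $\pi(z)_l$ is then at most $m_l$. By the explicit recipe for the symmetric square lift in Section 4, the conductor at $l$ of $\rho_z$ is bounded by a constant depending only on $m_l$ (in each of the Cases (I), (II), (III) the conductor of the lift is at most a fixed multiple of that of the $\GL_2$-parameter). As $l$ is a \emph{fixed} prime, this forces $\rho_z|_{I_l}$ into finitely many isomorphism classes: the image of wild inertia is an $l$-group of bounded order, tame inertia acts through cyclic quotients of bounded order, and the monodromy operator lies in one of finitely many nilpotent conjugacy classes in $\mathfrak{gl}_3$. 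Consequently, for our fixed $\sigma\in I_l$ the set $\{T_z(\sigma):z\ \text{classical}\}$ is finite; enlarging $E$ if necessary we write it as $\{c_1,\dots,c_n\}\subset E$.

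Next I would conclude by a density argument. The function $\prod_{i=1}^{n}(T(\sigma)-c_i)\in\cO(E(K^p,\mathfrak{m}))$ vanishes at every classical point; since classical points are Zariski-dense (Lemma \ref{zda}) and $E(K^p,\mathfrak{m})$ is reduced, this product is identically zero. Hence $T(\sigma)(x)\in\{c_1,\dots,c_n\}$ for every $x\in E(K^p,\mathfrak{m})$, and the continuous map $x\mapsto T(\sigma)(x)$, taking values in the finite discrete set $\{c_1,\dots,c_n\}$, is locally constant, in particular constant on connected components. Since $\sigma\in I_l$ was arbitrary, $T|_{I_l}$ is constant on connected components.

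The step I expect to be the main obstacle is the first one: it requires the precise interpolation property of $T$ at classical points (local--global compatibility at the prime $l\neq p$, sharp enough to identify $\rho_z|_{G_l}$ in terms of $\pi(z)_l$ and to bound its conductor) together with the finiteness of local Galois representations of $G_{\Q_l}$ of bounded conductor for fixed residue characteristic. Both are standard, but the second is the only genuinely non-formal input; once it is available, the rigid-analytic part of the argument is routine.
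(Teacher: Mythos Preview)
Your argument is correct and is essentially the proof of the result the paper invokes. The paper's own proof is a bare citation to \cite[Lemma 7.8.17]{BC}; what you have written is, up to cosmetic rearrangement, exactly the argument behind that lemma specialised to this situation: bound the conductor at $l$ uniformly over classical points using the fixed tame level, deduce that $T(\sigma)$ takes only finitely many values on the Zariski-dense classical locus, and conclude by reducedness that $T(\sigma)$ is locally constant.

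One small comment on presentation: your finiteness step (``this forces $\rho_z|_{I_l}$ into finitely many isomorphism classes'') is the heart of the matter and your justification is a little telegraphic. The cleanest way to phrase it, and the way \cite{BC} does, is to produce a single open subgroup $J\subset I_l$, depending only on the conductor bound and the dimension, such that $\rho_z(J)$ is unipotent for every classical $z$ (Grothendieck's monodromy theorem plus the bound on the Artin conductor). Then $T(\sigma)=3$ for all $\sigma\in J$ at classical points, hence everywhere; so $T|_{I_l}$ factors through the finite quotient $I_l/J$, and local constancy is immediate. This avoids having to enumerate inertial types and makes the ``only genuinely non-formal input'' you flag completely transparent.
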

\begin{proof} \cite{BC} Lemma 7.8.17.
\end{proof}

\begin{prop}\label{rigidity}
Let $x \in E(K^p,\mathfrak{m})(\Qbar_p)$ be a classical point and choose an automorphic representation $\pi_x$ which gives rise to $x$. Assume for some $l \neq p$, $\pi_{x,l}$ is supercuspidal and that the $L$-packet of $\pi_{x,l}$ contains exactly two elements. 
Let $y \in E(K^p,\mathfrak{m})(\Qbar_p)$ be another classical point which is on the same connected component as $x$. 
Then any automorphic representation $\pi_{y}$ giving rise to $y$ has the property that $\pi_{y,l}$ is supercuspidal, and the $L$-packet containing $\pi_{y,l}$ is again of size two.
\end{prop}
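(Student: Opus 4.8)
The plan is to transport the local structure of $\pi_{x,l}$ to $\pi_{y,l}$ via the global pseudorepresentation $T$ attached to the eigenvariety, using the previous two lemmas and the local symmetric square lifting. First I would observe that since $x$ and $y$ lie on the same connected component of $E(K^p,\mathfrak{m})$, the restriction $T|_{I_l}$ is constant on that component; hence the specializations $T_x|_{I_l}$ and $T_y|_{I_l}$ agree. These are the traces of three-dimensional $\operatorname{Gal}(\Qbar_l/\Q_l)$-representations which, by the construction of $T$ at classical points (as in the proof of Lemma 2.2 of \cite{p-adicpackets}), are the local Langlands parameters of the symmetric square lifts of $\pi_{x,l}$ and $\pi_{y,l}$ to $\GL_3(\Q_l)$. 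More precisely, writing $\til{\pi}_x, \til{\pi}_y$ for representations of $\GL_2(\Q_l)$ restricting to $\pi_{x,l}, \pi_{y,l}$ respectively, the pseudorepresentation records the lift $\pi = \Sym^2$-lift of $\til\pi_x$ and $\pi' = \Sym^2$-lift of $\til\pi_y$ in the sense of Section 4.

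The second step is to extract from the equality $T_x|_{I_l} = T_y|_{I_l}$ that $\pi$ and $\pi'$ lie in the same Bernstein component. The point is that the inertial type — equivalently, the isomorphism class of $\pi|_{I_l}$ as a Weil–Deligne representation restricted to inertia up to unramified twist, together with the monodromy — is determined by $T|_{I_l}$; since the latter is constant on the component, the inertial support of $\pi'$ equals that of $\pi$ up to unramified twist, which is exactly the condition of lying in the same Bernstein component. Here I use that $\pi_{x,l}$ is supercuspidal with $L$-packet of size two, so by Case (II) of Section 4 and the discussion before Proposition \ref{sym2} its lift $\pi$ is of the form $\Ind^{\GL_3(\Q_l)}_P(\pi(\mu),\varpi_{E/\Q_l})$ with $\pi(\mu)$ supercuspidal, i.e.\ $\chi\overline\chi^{-1}$ not quadratic.

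The third step is simply to invoke Proposition \ref{sym2}: applying it with $\sigma = \til\pi_x$, $\sigma' = \til\pi_y$, $\pi$ and $\pi'$ in the same Bernstein component, we conclude that $\til\pi_y = \pi(\chi')$ is supercuspidal with $\chi'\overline{\chi'}^{-1}$ not quadratic, hence by Remark \ref{sc2} the $L$-packet $\Pi(\til\pi_y)$ — which contains $\pi_{y,l}$ by construction — is of size two. This gives both assertions.

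The main obstacle I expect is the bookkeeping in the second step: one must be careful that what the pseudorepresentation literally pins down is $T|_{I_l}$, a trace function, and deduce from this the full inertial type rather than merely the restriction to inertia of the semisimplification. For three-dimensional representations with the very rigid shape arising from $\Sym^2$-lifts (a sum of an induced two-dimensional piece and a ramified character, or a twist of Steinberg), the trace on inertia together with knowledge that $x$ itself sits in a supercuspidal-type component should suffice to rule out the principal-series and Steinberg cases for $\pi'$ and to control the unramified-twist ambiguity. An alternative, perhaps cleaner, route that sidesteps part of this is to argue directly that the idempotent $e_\eta$-summand of $\cM$ (with the tame level chosen so $K_l = K_1(m_l)$ at $l$) is supported exactly on the locus of points whose associated automorphic representation has $\pi_l$ in a fixed Bernstein component, and that this locus is open and closed; I would keep this in reserve in case the pseudorepresentation argument needs more than is stated in the excerpt.
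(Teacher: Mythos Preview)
Your proposal is correct and follows essentially the same route as the paper: lift to $\GL_2$, identify the specialization of $T$ at a classical point with the trace of $\Sym^2(\rho(\til\pi))\otimes\det(\rho(\til\pi))^{-1}$, use constancy of $T|_{I_l}$ on connected components to conclude that the $\GL_3$-lifts lie in the same Bernstein component, and then invoke Proposition~\ref{sym2}. The paper's own proof is in fact terser than yours on the step you flag as delicate (passing from equality of pseudocharacters on inertia to equality of Bernstein components), so your caution there is well placed but does not represent a divergence from the paper's argument.
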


\begin{proof} Choose lifts $\til{\pi}_x$ and $\til{\pi}_y$ to $\GL_2$ that are unramified at all places not in $S$. Then $T_x$ is the trace of the representation $\tau_{x}:=\Sym^2(\rho(\til{\pi}_x))\otimes \det(\rho(\til{\pi}_x))^{-1}$, where $\rho(\til{\pi}_x)$ is the 2-dimensional Galois representation associated to $\til{\pi}_x$ by Deligne and similarly for $y$. 
As ${T_x}|_{I_l}= {T_y}|_{I_l}$, the lifts $\mathrm{rec}^{-1}(\tau_{x,l})$ of $\til{\pi}_{x,l}$ and $\mathrm{rec}^{-1}(\tau_{y,l})$ of $\til{\pi}_{y,l}$ to $\GL_3(\Q_l)$ are in the same Bernstein component and we can apply Proposition \ref{sym2} to get the result.  
\end{proof}

\section{Existence of non-classical $p$-adic automorphic forms}\label{sec: mainthm}

Let $p\geq 7$ be a prime.
Let $\Pi(\theta)$ be an endoscopic $L$-packet attached to an algebraic character 
\[\theta: \mathrm{Res}_{F/\Q}\mathbb{G}_m(\A) \rightarrow \C^*
\] 
of the adelic points of the elliptic endoscopic group $\mathrm{Res}_{F/\Q}\mathbb{G}_m$, where $F/\Q$ is an imaginary quadratic field in which $p$ splits. 
Let $S$ be the set of finite primes $l$, where $\Pi(\theta)_l$ is ramified. 
We say that $\Pi(\theta)$ satisfies \textit{Hypothesis $(\star)$} if
\begin{enumerate}
	\item  $\Pi(\theta)_p$ is a singleton consisting of a $\SL_2(\Z_p)$-unramified representation, 
	\item for all $l \in S$, $\Pi(\theta)_l$ is a supercuspidal $L$-packet of size two, 
	\item $2 \notin S$ and
  \item $\Pi(\theta)_{\infty}=\{D_{k+1}^\pm\}$ is a discrete series $L$-packet of weight $k+1\geq 1$.
\end{enumerate}
We refer the reader to \cite[Lemma 4.1]{p-adicpackets} for the construction of examples of such $L$-packets. 
\begin{lem}\label{mult} Let $\tau_f \in \Pi(\theta)_f$ be any element. 
Then exactly one of the representations 
$\tau_f \otimes D^+_{k+1}$ and $\tau_f \otimes D^{-}_{k+1}$ is automorphic. 
\end{lem}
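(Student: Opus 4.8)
The plan is to use the multiplicity formula for the endoscopic $L$-packet $\Pi(\theta)$ of $\SL_2$ coming from Labesse--Langlands, together with the parity constraint that forces exactly one of the two archimedean discrete series constituents to contribute. Recall that for an $L$-packet $\Pi(\theta)$ attached to a character $\theta$ of $\operatorname{Res}_{F/\Q}\mathbb{G}_m$, the global multiplicity of an element $\pi = \otimes_v \pi_v \in \Pi(\theta)$ is given by
\[
m(\pi) = \frac{1}{|\mathcal{S}_\theta|}\sum_{s \in \mathcal{S}_\theta} \prod_v \langle s, \pi_v\rangle,
\]
where $\mathcal{S}_\theta$ is the relevant component group (of order $2$ in the endoscopic case) and $\langle \cdot, \pi_v\rangle$ is the local character of the local packet $\Pi(\theta)_v$ determined by $\pi_v$. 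First I would recall from \cite{LL} the explicit description of these local characters: at a place $v$ where $\Pi(\theta)_v$ is a singleton the sign is trivial, while at the archimedean place the discrete series packet $\{D_{k+1}^+, D_{k+1}^-\}$ has size two and the two constituents are distinguished by the two opposite signs of $\mathcal{S}_\theta \cong \Z/2\Z$.

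Next I would feed in Hypothesis $(\star)$. By $(\star)(1)$ the local packet at $p$ is a singleton, so the local sign there is $+1$; similarly at all finite $l \notin S$. At the primes $l \in S$, $(\star)(2)$ says $\Pi(\theta)_l$ is a supercuspidal $L$-packet of size two, so $\langle \cdot, \tau_l\rangle$ is a genuinely nontrivial character of $\mathcal{S}_\theta$; however, whatever its value is, it is fixed once $\tau_f \in \Pi(\theta)_f$ is fixed. Write $\varepsilon_f := \prod_{v \text{ finite}} \langle s_0, \tau_v\rangle$ for the nontrivial element $s_0 \in \mathcal{S}_\theta$ — a fixed sign depending only on $\tau_f$. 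Then for the two global representations $\tau_f \otimes D_{k+1}^\pm$ the multiplicity formula collapses to
\[
m(\tau_f \otimes D_{k+1}^{\pm}) = \tfrac12\bigl(1 + \varepsilon_f \cdot \langle s_0, D_{k+1}^{\pm}\rangle\bigr),
\]
and since $\langle s_0, D_{k+1}^+\rangle = -\langle s_0, D_{k+1}^-\rangle$, exactly one of the two values is $1$ and the other is $0$. Combined with multiplicity one for cuspidal representations of $\SL_2$ (\cite[Theorem 4.1.1]{Ramakrishnan}), this gives the claim.

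The main obstacle will be pinning down the precise form of the local and global multiplicity signs in the endoscopic case and verifying that the archimedean sign genuinely separates the two discrete series — in particular that $D_{k+1}^+$ and $D_{k+1}^-$ lie in distinct $\mathcal{S}_\theta$-eigenspaces rather than both pairing trivially. This is exactly the phenomenon singled out in the introduction as the reason the split group $\SL_2(\R)$ has discrete series $L$-packets of size two; I would cite \cite{LL} for the archimedean packet structure and the associated characters, and note that the algebraicity and regularity built into $(\star)(4)$ (weight $k+1 \geq 1$) ensure we are in the range where this dichotomy holds. The rest is bookkeeping with the finitely many local signs at $S \cup \{p, \infty\}$, all of which are harmless once $\tau_f$ is fixed.
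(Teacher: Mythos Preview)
Your proposal is correct and follows exactly the same approach as the paper: the paper's proof is a one-line citation of the multiplicity formulae in \cite{LL} (specifically Proposition~6.7 there), and you have simply unpacked that citation into the explicit computation with the component group $\mathcal{S}_\theta \cong \Z/2\Z$ and the opposite archimedean signs on $D_{k+1}^\pm$. The appeal to Ramakrishnan's multiplicity one is harmless but unnecessary, since the formula already yields $m \in \{0,1\}$ directly.
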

\begin{proof} This follows from the multiplicity formulae in \cite{LL}; more precisely we are in the situation of Proposition 6.7 in loc.cit.
\end{proof}

Let $c_l:=c(\Pi(\theta)_l)$  be the conductor of the local $L$-packet $\Pi(\theta)_l$ as in Section \ref{sec: newforms}.
From now on we fix the tame level to be 
\[ K^p:= \prod_{l \in S } K_1(c_l) \times \prod_{l \notin S\cup \{p\}}\SL_2(\Z_l) \subset \SL_2(\A_f^p).
\]
We also fix a coefficient field $E/\Qbar_p$ big enough to contain the values of all characters $\eta$ occurring below. For the main theorem it suffices that $E$ contains the $(l-1)$th roots of unity for all $l \in S$. 

Furthermore we fix $\tau_f \in \Pi(\theta)_f$ such that $(\tau^p_f)^{K^p} \neq 0$. 
Let 
\begin{equation}\label{tau'}
\tau' \in \{\tau_f \otimes D^+_{k+1}, \tau_f \otimes D^-_{k+1}\}
\end{equation}
be the unique representation with $m(\tau')=1$. Then $\tau'$ gives rise to a maximal ideal $\mathfrak{m} \subset \mathbb{H}$ as follows: Let $\til{\tau}'$ be a lift of $\tau'$ to $\GL_2(\A)$ which is unramified at all $l \notin S$. Attached to $\til{\tau}'$ there is a two dimensional Galois representation $\rho: G_{\Q,S} \rightarrow \GL_2(\Qbar_p)$ constructed by Deligne. Let $\rhobar$ be the reduction of $\rho  \mod p$. Define $\mathfrak{m}$ to be the kernel of the map
\[\mathbb{H} \rightarrow \overline{\mathbb{F}}_p, \ t_l +1  \mapsto l \frac{\mathrm{tr}^2(\rhobar(\mathrm{Frob}_l))}{\det(\rhobar(\mathrm{Frob}_l))}. \]  

\begin{lem} The ideal $\mathfrak{m}$ is not equal to $\mathfrak{m}_{0}$.
\end{lem}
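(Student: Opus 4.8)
The plan is to realise both $\mathfrak{m}$ and $\mathfrak{m}_0$ as the maximal ideals of $\mathbb{H}$ cut out by the mod-$p$ reductions of two three-dimensional Galois pseudocharacters of $G_{\Q,S}$, and then to distinguish these. Write $\rho=\rho_{\til{\tau}'}\colon G_{\Q,S}\to\GL_2(\Qbar_p)$ for the Galois representation attached to $\til{\tau}'$ and put
\[
\tau:=\Sym^2\rho\otimes(\det\rho)^{-1},
\]
a three-dimensional representation of $G_{\Q,S}$. As in the proof of Proposition~\ref{rigidity}, $\tr\,\tau$ is the pseudocharacter whose reduction defines the point $\mathfrak{m}$, i.e.\ $\mathfrak{m}$ is the kernel of the ring map $\mathbb{H}\to\overline{\mathbb{F}}_p$ determined by the values $\tr\,\overline{\tau}(\Frob_l)$, $l\notin S\cup\{p\}$. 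Likewise $\mathfrak{m}_0$ corresponds to the pseudocharacter $\Frob_l\mapsto l+1+l^{-1}$, which is what the rule $t_l\mapsto l^2+l$ becomes under the normalisation $T(\Frob_l)=\tfrac1l(t_l+1)$ of the pseudorepresentation lemma, and $l+1+l^{-1}$ is the trace at $\Frob_l$ of $\mathbf{1}\oplus\overline{\chi}_{\mathrm{cyc}}\oplus\overline{\chi}_{\mathrm{cyc}}^{-1}$, where $\overline{\chi}_{\mathrm{cyc}}$ is the mod-$p$ cyclotomic character. So it suffices to prove $\overline{\tau}^{\,\mathrm{ss}}\not\cong\mathbf{1}\oplus\overline{\chi}_{\mathrm{cyc}}\oplus\overline{\chi}_{\mathrm{cyc}}^{-1}$: since both are three-dimensional and $p\geq 7>3$, the Brauer--Nesbitt theorem and Chebotarev density then force the two maps $\mathbb{H}\to\overline{\mathbb{F}}_p$ to disagree on some $t_l$, i.e.\ $\mathfrak{m}\neq\mathfrak{m}_0$.

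To control $\overline{\tau}$ I would exploit the CM structure of $\til{\tau}'$. Since $\Pi(\theta)$ is endoscopic, attached to an algebraic Hecke character of the imaginary quadratic field $F$, its lift $\til{\tau}'$ is the automorphic induction of that character from $F$ to $\Q$; hence $\rho\cong\Ind_{G_F}^{G_\Q}\psi$ for a continuous character $\psi\colon G_F\to\Qbar_p^*$, with $G_F=\Gal(\Qbar/F)$. Let $c$ be the nontrivial element of $\Gal(F/\Q)$, $\psi^c$ the conjugate character, and $\chi_F$ the quadratic character of $G_\Q$ cutting out $F$. Combining the projection formula $\Ind_{G_F}^{G_\Q}(A)\otimes B\cong\Ind_{G_F}^{G_\Q}(A\otimes\Res_{G_F}B)$, the identity $\Res_{G_F}\Ind_{G_F}^{G_\Q}\psi\cong\psi\oplus\psi^c$, and the determinant formula $\det(\Ind_{G_F}^{G_\Q}\psi)\cong\chi_F\cdot\nu$ (where $\nu$ is the transfer of $\psi$, so that $\Res_{G_F}\nu\cong\psi\psi^c$), a short computation yields
\[
\tau\;\cong\;\Ind_{G_F}^{G_\Q}(\psi\cdot(\psi^c)^{-1})\;\oplus\;\chi_F .
\]
(Automorphically this is the familiar fact that the Gelbart--Jacquet symmetric square lift of a CM form is the isobaric sum of a $\GL_2$-automorphic induction from $F$ with the quadratic character $\chi_F$; it is also visible from Case~(II) of the symmetric square lifting recipe.) In particular $\chi_F$ is a direct summand of $\tau$; being valued in $\{\pm1\}$ with $p$ odd, it equals its own reduction, so $\chi_F$ appears among the Jordan--H\"older constituents of $\overline{\tau}^{\,\mathrm{ss}}$.

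Now assume for contradiction that $\mathfrak{m}=\mathfrak{m}_0$. Then $\overline{\tau}^{\,\mathrm{ss}}\cong\mathbf{1}\oplus\overline{\chi}_{\mathrm{cyc}}\oplus\overline{\chi}_{\mathrm{cyc}}^{-1}$, so $\chi_F$ must be isomorphic to one of $\mathbf{1}$, $\overline{\chi}_{\mathrm{cyc}}$, $\overline{\chi}_{\mathrm{cyc}}^{-1}$. But $\chi_F\neq\mathbf{1}$ since $F\neq\Q$, and $\chi_F$ has order exactly $2$, whereas $\overline{\chi}_{\mathrm{cyc}}^{\pm1}$ has order $p-1\geq 6$ because $p\geq 7$. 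This is impossible, hence $\mathfrak{m}\neq\mathfrak{m}_0$.

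The step I expect to be the main obstacle is the Galois-theoretic computation of the second paragraph: one must verify that the $\GL_3$-valued object built by the Gelbart--Jacquet symmetric square lift (which is used to define $\mathfrak{m}$) corresponds on the Galois side exactly to $\Sym^2\rho\otimes(\det\rho)^{-1}$, and then carry out the tensor-induction bookkeeping correctly, in particular keeping track of the quadratic character $\chi_F$ and the transfer $\nu$. Everything else is routine. It is worth noting that the argument is insensitive to whether $\psi=\psi^c$ (equivalently, to the irreducibility of $\rho$): even in the degenerate case the summand $\chi_F$ still occurs in $\tau$, so the conclusion is robust, and the only arithmetic input beyond the structure theory is the hypothesis $p\geq 7$, which forces $\overline{\chi}_{\mathrm{cyc}}^{\pm1}$ not to be quadratic.
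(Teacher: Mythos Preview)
Your argument is correct and takes a genuinely different route from the paper's.

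The paper argues directly with the explicit formula defining $\mathfrak{m}$: since $\rho=\Ind_{G_F}^{G_\Q}\psi$, one has $\tr\rhobar(\Frob_l)=0$ for every prime $l$ inert in $F$, a set of density $1/2$. If $\mathfrak{m}=\mathfrak{m}_0$ held, the defining congruence would force $l^2+l+1\equiv 0\pmod p$ for all such $l$; but the primes satisfying that congruence have density at most $2/(p-1)<1/2$ when $p\ge 7$, a contradiction. This is a short counting/density argument that avoids any explicit decomposition of $\tau$.

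Your approach is more structural: you translate both $\mathfrak{m}$ and $\mathfrak{m}_0$ into semisimple three\nobreakdash-dimensional mod~$p$ Galois representations and exhibit a Jordan--H\"older constituent on one side (the quadratic character $\chi_F$) that cannot occur on the other. The tensor-induction identity $\Sym^2(\Ind\psi)\otimes\det^{-1}\cong\Ind(\psi(\psi^c)^{-1})\oplus\chi_F$ is the crux, and your derivation of it is correct. The payoff is a cleaner conceptual statement (and the argument would generalise more readily to other dihedral/endoscopic situations), while the paper's proof is more elementary and requires no bookkeeping with $\Sym^2$, transfers, or Brauer--Nesbitt. Both proofs ultimately rest on the same input, namely that $\rho$ is induced from $G_F$, and both use $p\ge 7$ in an essential way (you to ensure $\overline{\chi}_{\mathrm{cyc}}$ has order $>2$, the paper to ensure $2/(p-1)<1/2$). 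One small caution: the explicit formula the paper gives for $\mathfrak{m}$ and the normalisation $T(\Frob_l)=\tfrac1l(t_l+1)$ differ from $\tr\overline{\tau}(\Frob_l)$ by a constant shift; this is harmless for your argument since you invoke the identification via Proposition~\ref{rigidity} rather than the explicit formula, but it is worth double-checking that your two pseudocharacters are being compared under a consistent normalisation.
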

\begin{proof} The representation $\rho$ is induced from a character of $G_F$. Therefore the trace of $\rhobar$ vanishes for all primes $l$ that are inert in $F$, which is of density $1/2$. On the other hand the density of primes $l$ such that $l^2+l+1= 0 \mod p$ is $2/(p-1) < 1/2$, as $p\geq 7$. The claim follows.
\end{proof}

Let $z = (\chi \psi_W, \lambda) \in E(K^p, \mathfrak{m})(\Qbar_p)$ be any classical point coming from an automorphic representation $\pi$. We denote by $\Pi(z)$ the global $L$-packet containing $\pi$. Note that this is well-defined as $\lambda$ determines a unique representation $\pi_l$ of $\SL_2(\Q_l)$ at all places $l \notin S$ and for $\SL_2$ this is enough to determine the global $L$-packet. 

\begin{defn} Let $z$ be a classical point on $E(K^p, \mathfrak{m})$. We say $z$ is \emph{stable} if the $L$-packet $\Pi(z)$ defined by $z$ is a stable $L$-packet. Otherwise we call the point an \emph{endoscopic} point. 
\end{defn}
 
The representation $\tau'$ from (\ref{tau'}) above gives rise to two distinct points $x,y \in E(K^p, \mathfrak{m})(\Qbar_p)$, one of which, say $x$, is of critical slope whereas the point $y$ is of non-critical slope in the sense of \cite[Def.4.4.3]{emerton-jacquet1}. One verifies this easily using \cite[Lemma 4.4.1]{emerton-jacquet1}, a calculation like in Lemma 3.3 of \cite{p-adicpackets} and the fact that $\tau_p \cong \Ind_B^{\SL_2(\Q_p)}(\theta_w\theta^{-1}_{\overline{w}})$, where $w$ and $\overline{w}$ denote the two places in $F$ above $p$.

We say that a global $L$-packet $\Pi$ has property $\mathcal{P}(S)$ if 
\begin{itemize}
	\item the local $L$-packet $\Pi_l$ is unramified for all $l \notin S$, 
\item the local $L$-packet $\Pi_l$ is supercuspidal of size two for all $l \in S$. 
\end{itemize}
By Lemma \ref{rigidity} any classical point $z \in E(K^p,\mathfrak{m})$ on the same connected component as $x$ has property $\mathcal{P}(S)$. 

For a $L$-packet $\Pi$ we let $\omega_{\Pi_S}: \prod_{l \in S} Z(\SL_2(\Q_l))\rightarrow \{\pm 1\}$ be the product of the central characters of the representations $\pi_l \in \Pi_l$ for $l \in S$. 
The following proposition is the key calculation for our main theorem. 
Define $c:= \prod_{l \in S} c_l$, , where $c_l:=c(\Pi(\theta)_l)$ as before and let $K_0(c):=\prod_{l \in S} K_0(c_l)$.

\begin{prop}\label{fibres}
\begin{enumerate}
	\item Let $z \in E(K^p,\mathfrak{m})(\Qbar_p)$ be a stable point on the same connected component as the point $x$. Let $\Pi(z)$ be its associated $L$-packet. Let $\eta = \prod \eta_l:\prod_{l\in S} \Z_l^* \rightarrow \Qbar_p^*$ be a character such that $c(\eta_l) \leq c(\Pi(z)_l)$ for all $l \in S$. Then if $ \eta(-1) = \omega_{\Pi(z)_S}(-1)$ we have
\[\dim_{k(z)} e_{\eta}\cM^{cl}_{\zbar} \geq 2(\prod_{l\in S}2(c_l-c(\Pi(z)_l)+1)). \]
Otherwise $e_{\eta}\cM^{cl}_{\zbar}=0$. 
\item Let $\eta = \prod \eta_l:\prod \Z_l^* \rightarrow \Qbar_p^*$ be a character such that $c(\eta_l) \leq c_l$ for all $\l \in S$. Then 
\[\dim_{k(x)} e_{\eta}\cM^{cl}_{\xbar} = 2^{\# S} \]
if $ \eta(-1) = \omega_{\Pi(\theta)_S}(-1)$. Otherwise $e_{\eta}\cM^{cl}_{\xbar}=0$. 
\end{enumerate}
\end{prop}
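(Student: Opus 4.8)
\textbf{Proof proposal for Proposition \ref{fibres}.}

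The plan is to unwind both sides through the classical description of the fibre $\cM^{cl}_{\zbar}$ recalled just after Theorem \ref{evs}, apply the idempotent $e_\eta$ to extract the $\eta$-isotypic part at the ramified places, and then feed in the dimension count of Proposition \ref{newforms} together with the multiplicity bookkeeping of Lemma \ref{mult}. Concretely, for a classical point $z$ coming from an automorphic representation $\pi$ with global $L$-packet $\Pi(z)$, we have
\[
e_\eta\cM^{cl}_{\zbar}\otimes_{k(z),\iota}\C\ \cong\ \bigoplus_{\substack{\sigma\ \mathrm{adm.\ rep.}:\\ \sigma_l\cong\pi_l\ \forall l\notin S\cup\{p\}}} m(\sigma)\,\bigl(\sigma_{S}\bigr)_\eta^{K_0(c)}\otimes J_B(\sigma_p)^{\chi}\otimes H^1_{rel.Lie}(\mathfrak{g},K_\infty,W_\C\otimes\sigma_\infty),
\]
using that $e_\eta$ projects onto $(\sigma_S)_\eta^{K_0^{\ram}}$ and that here $K_l=K_1(c_l)$, so $(\sigma_S)^{K^{\ram}}$ already contains $(\sigma_S)_\eta^{K_0(c)}$ as a direct summand. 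The central-character remark in Section \ref{sec: newforms} forces $(\sigma_S)_\eta^{K_0(c)}=0$ unless $\eta(-1)=\omega_{\Pi_S}(-1)$, which gives the vanishing clauses in both (1) and (2) immediately.

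For part (1): $z$ is stable, so every element of the global $L$-packet $\Pi(z)$ is automorphic with multiplicity one (by the stable case of the Labesse--Langlands multiplicity formula, and multiplicity one for $\SL_2$). The sum over $\sigma$ therefore runs over all $\sigma_f\in\Pi(z)_f$ with the prescribed unramified components paired with \emph{both} archimedean members $D^\pm_{k+1}$; by Lemma \ref{(g,K)} (equivalently Lemma \ref{jacdim} for the $p$-adic factor, which is one-dimensional since $z$ is classical hence $\chi$ generic) each nonzero summand contributes the dimension $\dim(\sigma_S)_\eta^{K_0(c)}$, and summing over the two archimedean choices gives the factor $2$. The remaining factor is $\sum_{\sigma_S\in\Pi(z)_S}\dim(\sigma_S)_\eta^{K_0(c)}$, which by the multiplicativity over $l\in S$ and Proposition \ref{newforms}(2) equals $\prod_{l\in S}2(c_l-c(\Pi(z)_l)+1)$. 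The inequality (rather than equality) is because I only count contributions of $\sigma$ whose components away from $p$ lie in $\Pi(z)$ and whose $J_B(\sigma_p)^\chi$ is nonzero; there may be further contributions, and the $M_\sigma$-type factors $J_B(\sigma_p)^\chi$ could a priori be larger, so a clean lower bound is all that is claimed.

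For part (2): now $z=x$ comes from $\tau'=\tau_f\otimes D^{\epsilon}_{k+1}$, the unique automorphic member of $\{\tau_f\otimes D^\pm_{k+1}\}$ by Lemma \ref{mult}. Since $c_l=c(\Pi(\theta)_l)$ for each $l\in S$, Proposition \ref{newforms}(1) gives $\sum_{\sigma_l\in\Pi(\theta)_l}\dim(\sigma_l)_\eta^{K_0(c_l)}=2$, hence $\sum_{\sigma_S\in\Pi(\theta)_S}\dim(\sigma_S)_\eta^{K_0(c)}=2^{\#S}$. The key point, and the one place part (2) genuinely differs from part (1), is that in the $L$-packet $\Pi(\theta)$ exactly one archimedean member is automorphic for each choice of finite component (Lemma \ref{mult}), so the archimedean sum contributes a factor $1$ rather than $2$; moreover for the automorphic $\tau'$ the $p$-adic Jacquet eigenspace $J_B(\tau'_p)^\chi$ is exactly one-dimensional by Lemma \ref{jacdim}, since $\tau_p\cong\Ind_B^{\SL_2(\Q_p)}(\theta_w\theta_{\overline w}^{-1})$ is an irreducible unramified principal series. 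Combining, $\dim_{k(x)}e_\eta\cM^{cl}_{\xbar}=1\cdot 1\cdot 2^{\#S}=2^{\#S}$.

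The main obstacle is justifying that the sum in the displayed decomposition, after applying $e_\eta$, is \emph{exactly} indexed by the members of the single $L$-packet $\Pi(\theta)$ (resp. $\Pi(z)$) with fixed unramified components, with no stray contributions from other automorphic representations sharing the Hecke eigenvalue system $\lambda$ away from $S\cup\{p\}$ — i.e. that for $\SL_2$ the data $\lambda$ pins down the global $L$-packet (used already in defining $\Pi(z)$) and that no representation outside it contributes to the relevant eigenspace. For part (2) one must also be careful that the tame level $K^p=\prod_{l\in S}K_1(c_l)$ is chosen precisely so that $(\tau^p_f)^{K^p}\neq 0$ while minimizing the ramified factors, so that the count is governed by $c(\eta_l)\le c_l=c(\Pi(\theta)_l)$ and Proposition \ref{newforms}(1) applies on the nose; handling the case distinction between supercuspidal $L$-packets ``of the first kind'' and ``of the second kind'' is where the bookkeeping is most delicate, but in both cases Proposition \ref{newforms}(1) yields the same sum $2$ per prime, so the final answer is uniform.
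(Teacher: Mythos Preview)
Your proposal is correct and follows essentially the same route as the paper: unwind the classical fibre via the decomposition of $J_B(H^1_{\mathrm{par}}(K^p,W))$, project by $e_\eta$, and feed in Proposition~\ref{newforms} together with the multiplicity dichotomy (all members automorphic in the stable case, only half in the endoscopic case via Lemma~\ref{mult}), using Lemma~\ref{jacdim} at $p$ for the point $x$. The paper organizes part~(1) by introducing the set $X(z)=\{\pi\in\Pi(z):\pi_l=\pi_l^0\ \forall\, l\notin S\cup\{p\}\}$ and splitting it as $X(z)^+\sqcup X(z)^-$ according to $\pi_\infty$, but this is exactly your ``sum over both archimedean members'' reformulated.

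One small imprecision worth flagging: in part~(1) you invoke Lemma~\ref{jacdim} to say $J_B(\sigma_p)^\chi$ is one-dimensional at a stable point $z$, but that lemma requires $\sigma_p$ to be an \emph{irreducible} unramified principal series, and nothing rules out $\Pi(z)_p$ having size two (reducible principal series). The paper does not invoke Lemma~\ref{jacdim} here; it simply observes that the inequality in part~(1) arises precisely because $\Pi(z)_p$ might have two members, so summing over $X(z)^{\pm}$ gives at least $\prod_{l\in S}2(c_l-c(\Pi(z)_l)+1)$ from the choice of $\pi_p$ that actually defines $z$. Your subsequent hedge (``$J_B(\sigma_p)^\chi$ could a priori be larger, so a clean lower bound is all that is claimed'') rescues the argument, but it would be cleaner to drop the appeal to Lemma~\ref{jacdim} in part~(1) altogether and reserve it for part~(2), where Hypothesis~$(\star)$(1) guarantees $\Pi(\theta)_p$ is a singleton and the lemma applies on the nose.
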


\begin{proof} 
Given any classical point $z \in E(K^p,\mathfrak{m})(\Qbar_p)$ there is by definition an automorphic representation $\pi=\otimes \pi_l \in \Pi(z)$ with $(\pi_f^p)^{K^p} \neq 0$, and so for all $l \in S$, there exists $\eta'_l$ such that $(\pi_l)_{\eta'_l}^{K_0(c_l)}\neq 0$. Therefore $c(\Pi(z)_l)\leq c_l$.
First note that for $W=\Sym^k(E^2)$, we have
\begin{eqnarray*}
&& J_B(H_{\mathrm{par}}^1(K^p,W))\otimes_{E,\iota} \C \\
&=& J_B\left(\varinjlim_{K_p} \bigoplus_{\substack{\pi \ adm.\ rep.\\ of \SL_2(\A)}} m(\pi) \pi_f^{K^pK_p}\otimes H^1_{rel.Lie}(\mathfrak{g}, K_\infty, W_{\C}\otimes \pi_\infty)\right)\\
&=& \bigoplus_{\pi \ adm.\ rep} m(\pi) (\pi^p_f)^{K^p} \otimes J_B(\pi_p) \otimes H^1_{rel.Lie}(\mathfrak{g}, K_\infty, W_{\C}\otimes \pi_\infty).
\end{eqnarray*}
By Lemma \ref{(g,K)} above, the term 
\[
H^1_{rel.Lie}(\mathfrak{g}, K_\infty, W_{\C}\otimes \pi_\infty)
\]
 is non-zero if and only if $\pi_\infty = D_{k+1}^{\pm}$ and in this case it is isomorphic to $\C$.
Therefore for any classical point $z=(\chi \psi_W, \lambda) \in E(K^p,\mathfrak{m})(\Qbar_p)$ we have
\[
J_B(H^1(K^p,W))^{(\chi, \lambda)}\otimes_{k(z),\iota} \C= \bigoplus_{\substack{\pi \ adm.\ rep.\\ \pi_\infty = D_{k+1}^\pm \\ \pi^{S \cup\{p\}} \cong \pi_{\lambda}}} m(\pi) (\pi_{S})^{K^\ram} \otimes J_B(\pi_p)^{\iota(\chi)},
\]
where $k(z)$ is the residue field at $z$ and $\pi_\lambda$ is the representation of $\SL_2(\A_f^{S\cup\{p\}}) $ determined by $\iota \circ \lambda$. 
Here and below we write $\iota(\chi)$ for the composition $\iota \circ \chi$. 
 
We see that the direct sum runs over a subset of the global $L$-packet $\Pi(z)$ determined by $z$, namely over 
\[X(z):= \{\pi \in \Pi(z) | \pi_l=\pi_l^{0} \text{ for all } l \notin S \cup \{p\} \}.
\]
Here $\pi_l^0$ denotes the unique member of $\Pi(z)_l$, which has a fixed vector under $\SL_2(\Z_l)$. 
The following observation is crucial: If $z$ is a stable point then any $\pi \in X(z)$ is an automorphic representation, whereas for the point $x$ the number of automorphic representations in $X(x)$ is $\#X(x)/2$. 

Now let $\eta$ be as in the statement of the proposition and let $z=(\chi \psi_W,\lambda) \in E(K^p,\mathfrak{m})$ be a stable point on the connected component of $x$. Then
\begin{eqnarray*}
\dim_{k(z)} e_{\eta}\cM^{cl}_{\zbar}
 &=&  \dim_{k(z)} e_{\eta}\Hom(J_B(H^1(K^p,W))^{(\chi, \lambda)},k(z)) \\
&=& \sum_{\pi \in X(z)} {m(\pi) \dim_\C \left(\iota(e_{\eta})\left((\pi_{\ram})^{K^{\ram}} \otimes J_B(\pi_p)^{\iota(\chi)}\right)\right)}\\
&=& \sum_{\pi \in X(z)} {\dim_\C \left(\left(\iota(e_{\eta}) (\pi_{ram})^{K^{ram}}\right) \otimes J_B(\pi_p)^{\iota(\chi)}\right)}\\
&=& \sum_{\pi \in X(z)} \left(\prod_{l \in S} \dim_\C (\pi_l)^{K_0(c_l)}_{\iota(\eta_l)} \right) \dim_\C J_B(\pi_p)^{\iota(\chi)}, \\
\end{eqnarray*}
where in the second to last line we have used that any element $\pi \in X(z)$ is automorphic.
Define 
\[ X(z)^{+} = \{\pi \in X(z): \pi_\infty = D_{k+1}^+\}
\]
and $X(z)^{-} = \{\pi \in X(z): \pi_\infty = D_{k+1}^-\}$. 
Then
\begin{eqnarray*}
& & \dim_{k(z)} e_{\eta}\cM^{cl}_{\zbar} \\
&=& \sum_{\pi \in X(z)^{+}} \left(\prod_{l \in S} \dim_\C (\pi_l)^{K_0(c_l)}_{\iota(\eta_l)} \right) \dim_\C J_B(\pi_p)^{\iota(\chi)} \\ 
&& + \sum_{\pi \in X(z)^{-}} \left( \prod_{l \in S} \dim_\C (\pi_l)^{K_0(c_l)}_{\iota(\eta_l)}\right)\dim_\C J_B(\pi_p)^{\iota(\chi)}\\
&\geq & 2(\prod_{l\in S}2(c_l-c(\Pi(z)_l)+1)) \geq 2^{\# S +1}.
\end{eqnarray*}
where in the last line we have used Proposition \ref{newforms}. Furthermore the second to last inequality comes from the fact that $\Pi(z)_p$ might be of size two.

We now calculate the corresponding space at the point $x=(\chi(x) \psi_{W(x)}, \lambda(x)) \in E(K^p, \mathfrak{m})(\Qbar_p)$. Let $X(x)_{aut}\subset X(x)$ be the subset of automorphic representations. The key point in the following is that $X(x)_{aut} \neq X(x)$. We have
\begin{eqnarray*}
\dim_{k(x)} e_{\eta}\cM^{cl}_{\xbar} &=&  \dim_\C \left( e_{\eta} J_B(H^1(K^p,W(x)))^{(\chi(x), \lambda(x))}\otimes_{k(x)}\C \right) \\
&=& \sum_{\pi \in X(x)}{m(\pi) \dim_\C \left(\left(\iota(e_{\eta}) (\pi_{ram})^{K^{ram}}\right) \otimes J_B(\pi_p)^{\iota(\chi(x))}\right)}.\\
\end{eqnarray*}
By our assumptions the $L$-packet $\Pi(\theta)_p=\{\pi(\theta)_p\}$ is a singleton and by Lemma \ref{jacdim} 
\[\dim_\C J_B(\pi(\theta)_p)^{\iota(\chi(x))}=1.\]
Therefore
\begin{eqnarray*}\dim e_{\eta}\cM^{cl}_{\xbar}
&=& \sum_{\pi \in X(x)_{aut}} \prod_{l \in S} \dim (\pi_l)^{K_0(c_l)}_{\iota(\eta_l)}\\
&=& \prod_{l\in S} \sum_{\pi_l \in \Pi_l} \dim (\pi_l)^{K_0(c_l)}_{\iota(\eta_l)}\\
&=& \prod_{l\in S} 2 = 2^{\# S}.
\end{eqnarray*}
\end{proof}

\begin{thm}\label{mainthm} Let $\Pi(\theta)$ be an endoscopic $L$-packet of $\SL_2(\A)$ satisfying Hypothesis~$(\star)$. Let $\tau_f \in \Pi(\theta)_f$ be such that $\tau_f^{K^p}\neq 0$ and let $x$ be the point on $E(K^p, \mathfrak{m})$ of critical slope defined by the automorphic representation $\tau' \in \Pi(\theta)$ with $\tau'_f=\tau_f$. Then there exist non-zero non-classical forms in $\cM_{\xbar}$, i.e., 
\[\cM_{\xbar}/ \cM^{cl}_{\xbar} \neq 0.\]  
\end{thm}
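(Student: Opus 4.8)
The plan is to combine Proposition~\ref{fibres} with the semi-continuity of fibre ranks of a coherent sheaf on a reduced rigid space, exactly as outlined in the \textbf{Strategy} paragraph of the introduction. First I would reduce the statement to a statement about the coherent sheaf $e_\eta \cM$ for a single well-chosen character $\eta$: by Lemma~\ref{cohsum} the module $e_\eta \cM$ is a direct summand of $\cM$, hence coherent, and $e_\eta \cM_{\xbar} \subseteq \cM_{\xbar}$ with $e_\eta \cM_{\xbar}^{cl} = e_\eta \cM_{\xbar} \cap \cM_{\xbar}^{cl}$, so it suffices to exhibit one $\eta$ with $e_\eta \cM_{\xbar} / e_\eta \cM_{\xbar}^{cl} \neq 0$. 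Concretely I would pick $\eta = \prod_{l \in S}\eta_l$ with each $\eta_l$ trivial (so $c(\eta_l)=0 \leq c_l$) and observe that by Lemma~\ref{mult} and the compatibility of central characters inside a local $L$-packet, the sign condition $\eta(-1) = \omega_{\Pi(\theta)_S}(-1)$ either holds for the trivial $\eta$ or can be arranged by twisting each $\eta_l$ by a quadratic character of conductor $\leq 1 \leq c_l$; either way we get $\dim_{k(x)} e_\eta \cM_{\xbar}^{cl} = 2^{\#S}$ from part (2) of Proposition~\ref{fibres}.

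Next I would produce a stable classical point $z$ on the same connected component as $x$ at which $e_\eta \cM$ has strictly larger fibre rank. By Lemma~\ref{zda} the eigenvariety $E(K^p,\mathfrak{m})$ contains a Zariski-dense set of classical points accumulating at every point; applying this to the connected component $C$ of $E(K^p,\mathfrak{m})$ containing $x$, and using that by Lemma~\ref{rigidity} every classical point of $C$ has property $\mathcal{P}(S)$ (in particular $\Pi(z)_l$ is supercuspidal of size two with $c(\Pi(z)_l) \leq c_l$ for all $l\in S$), I would invoke the fact that the endoscopic-versus-stable dichotomy is governed by the behaviour of the associated pseudorepresentation: the stable points are precisely those whose Galois pseudorepresentation is \emph{not} induced from $G_F$. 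Since the endoscopic points form a proper (in fact lower-dimensional, cut out by the condition that a certain Frobenius-trace vanishes on a density-$1/2$ set of primes) closed subset, while the classical points are Zariski-dense, $C$ must contain a Zariski-dense set of classical stable points. In particular I can choose a stable $z \in C(\Qbar_p)$ with $c(\eta_l) \leq c(\Pi(z)_l)$ for all $l$ and with $\eta(-1) = \omega_{\Pi(z)_S}(-1)$ — the latter again holds because the central character of $\Pi(z)_l$ is determined by the restriction of the Galois pseudorepresentation to inertia at $l$, which is constant on $C$, hence equals $\omega_{\Pi(\theta)_l}$. Then part (1) of Proposition~\ref{fibres} gives
\[
\dim_{k(z)} e_\eta \cM_{\zbar} \;\geq\; \dim_{k(z)} e_\eta \cM_{\zbar}^{cl} \;\geq\; 2\Bigl(\prod_{l\in S} 2(c_l - c(\Pi(z)_l)+1)\Bigr) \;\geq\; 2^{\#S+1} \;>\; 2^{\#S} \;=\; \dim_{k(x)} e_\eta \cM_{\xbar}^{cl}.
\]

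Finally I would run the semi-continuity argument. The function $w \mapsto \dim_{k(w)} e_\eta \cM_{w}\otimes k(w)$ on the reduced rigid space $E(K^p,\mathfrak{m})$ is upper semi-continuous because $e_\eta\cM$ is coherent; hence for every $w$ in the connected component $C$ we have $\dim_{k(w)} e_\eta \cM_{\wbar} \geq \dim_{\mathrm{gen}}$, where $\dim_{\mathrm{gen}}$ is the rank of $e_\eta\cM$ at the generic point of any irreducible component of $C$, and in particular (taking $w = z$) the generic rank on the component through $z$ is at least $2^{\#S}$ — but I actually want the inequality the other way, so instead I argue: the set $\{w \in C : \dim e_\eta\cM_{\wbar} \geq 2^{\#S+1}\}$ is Zariski-closed and, by the previous paragraph, contains a Zariski-dense subset of $C$, hence is all of $C$; in particular $\dim_{k(x)} e_\eta \cM_{\xbar} \geq 2^{\#S+1} > 2^{\#S} = \dim_{k(x)} e_\eta\cM_{\xbar}^{cl}$, so $e_\eta\cM_{\xbar}/e_\eta\cM_{\xbar}^{cl}\neq 0$ and a fortiori $\cM_{\xbar}/\cM_{\xbar}^{cl}\neq 0$. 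The main obstacle I anticipate is not the semi-continuity formalism but the step establishing Zariski-density of stable classical points on the component $C$ through $x$: one has to know both that $C$ meets the stable locus at all (which is why Hypothesis~$(\star)$ and the rigidity Proposition~\ref{rigidity} are set up so that the Galois side is symmetric-square data forcing the component to contain non-induced pseudorepresentations) and that such points are abundant enough that the closed condition $\dim e_\eta\cM_{\wbar}\geq 2^{\#S+1}$ propagates to $x$; making this precise requires care about which classical points of $C$ are stable and about the accumulation property from Lemma~\ref{zda} at the specific point $x$.
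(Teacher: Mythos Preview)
Your overall strategy—Proposition~\ref{fibres} at $x$ versus at nearby stable points, combined with semi-continuity of fibre rank—is exactly the paper's. The substantive difference is that you work with a \emph{single} idempotent $e_\eta$, whereas the paper sets $\cM' := \bigoplus_{\eta \in \mathcal{X}} e_\eta \cM$ with $\mathcal{X} = \prod_{l\in S}\Hom(\Z_l^*/(1+l\Z_l),\Qbar_p^*)$. The point of summing is that for \emph{any} classical $z$ on the component, exactly $n=\prod_{l\in S}(l-1)/2$ of the characters $\eta\in\mathcal{X}$ satisfy the sign condition $\eta_l(-1)=\omega_{\Pi(z)_l}(-1)$ for all $l\in S$, independently of what those signs actually are. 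This gives $\dim(\cM'_{\zbar})^{cl}\ge n\,2^{\#S+1}$ at every stable $z$ and $=n\,2^{\#S}$ at $x$, with no need to compare central characters across the family.

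Your single-$\eta$ route requires the additional input that $\omega_{\Pi(z)_l}(-1)$ is constant along the connected component, which you assert follows from the constancy of $T|_{I_l}$. This is not immediate: the eigenvariety carries only the three-dimensional pseudorepresentation $T=\tr(\Sym^2\rho\otimes\det\rho^{-1})$, and one must argue that the adjoint representation at $l$ determines $\det\rho_l(-1)$. (This is in fact true: writing $\til\pi_l=\pi(\chi)$ for $\chi:E^*\to\C^*$, the adjoint pins down $E$ and $\mu=\chi\bar\chi^{-1}$, and any $\chi'$ with $\chi'\bar\chi'^{-1}=\mu$ differs from $\chi$ by a Galois-invariant character of $E^*$, which by Hilbert~90 factors through $N_{E/F}$ and hence is trivial on $-1$; so $\chi(-1)$, and thus $\omega_{\pi(\chi)}(-1)$, is determined.) But you do not supply this argument, and invoking Lemma~\ref{mult} here is a misattribution—that lemma concerns archimedean multiplicities, not local central characters. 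The paper's device of summing over $\mathcal{X}$ sidesteps the issue entirely and is the cleaner route.

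On the density of stable classical points: the paper obtains this by shrinking to an affinoid $U$ of constant $u_p$-slope and citing the argument of \cite[Theorem~4.3]{p-adicpackets}; your sketch (endoscopic locus is a proper Zariski-closed subset) is the right idea but would also need to be made precise.
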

\begin{proof}
Fix an affinoid open neighbourhood $U$ of $x$, such that $U$ contains a Zariski-dense set $Z$ of classical points. Such a neighbourhood exists by Lemma \ref{zda}, whose proof also provides us with the compact operator $u_p$. After possibly shrinking $U$ we may assume that the slope of $u_p$ is constant on $U$. By an argument as in the proof of \cite[Theorem 4.3]{p-adicpackets} we may assume that $\Pi(z)$ is stable for all $z \in Z, z \neq x$.  

Let $\mathcal{X}$ be the set of characters
\[ \prod_{l \in S} \Hom(\Z_l^*/(1+l\Z_l), \Qbar_p^*) \cong \prod_{l \in S} \mathbb{F}_l^*.
\]
As before we view $\eta \in \mathcal{X}$ as a character of the group $K_0(c)$. 
Define  
\[\cM':= \bigoplus_{\eta \in \mathcal{X}}e_{\eta}\cM . 
\]
By Lemma \ref{cohsum} this is a coherent sheaf on $E(K^p, \mathfrak{m})$ and a direct summand of $\cM$. 
For any classical point $z \in E(K^p,\mathfrak{m})(\Qbar_p)$, let 
\[(\cM'_{\zbar})^{cl}:= \cM_{\zbar}^{cl}\cap \cM_{\zbar}' = \bigoplus_{\eta \in \mathcal{X}} e_\eta \cM_{\zbar}^{cl}.
\] 
To prove the theorem it suffices to show that there exist non-classical forms in $\cM'_{\xbar}$.

Given any character
\[\mu: Z_S:=\prod_{l\in S} Z(\SL_2(\Q_l)) \rightarrow \Qbar_p^*\]
there are exactly $n:=\prod_{l\in S} (l-1)/2 $ elements $\eta \in \mathcal{X}$ such that $\eta|_{Z_S}=\mu$. 

Therefore by Proposition \ref{fibres} we have 
\[\dim_{k(x)} (\cM'_{\xbar})^{cl} = n 2^{\# S }.\]
On the other hand let $z \in Z$ be a stable point in $U$, then Proposition \ref{fibres} implies that 
\[\dim_{k(z)} (\cM'_{\zbar})^{cl} \geq n 2^{\# S + 1}.\]

As $Z$ is Zariski-dense in $U$ and $\cM'$ is coherent, the semi-continuity of the fibre rank implies that 
\[\dim_{k(x)} \cM'_{\xbar} \geq n 2^{\# S + 1}.
\] 
Therefore 
\[\cM'_{\xbar}/ (\cM_{\xbar}')^{cl} \neq 0. \]
\end{proof}

\begin{rem}\label{CMB}
\begin{enumerate}
	\item There is an analogous situation on the Coleman-Mazur eigencurve: For a modular eigenform $f \in S_{k}(\Gamma_1(N))$ of weight $k\geq 2$ with complex multiplication by $F$, the generalized eigenspace 
	\[\cM^\dagger_k(\Gamma_1(N)\cap \Gamma_0(p))_{(x_{crit})}\]
of overconvergent modular forms contains a non-classical form. For details see Proposition 2.11 and 2.13 in \cite{bellaichecrit}. Note however that it is necessary to pass to the \textit{generalized} eigenspace to see non-classical forms, the corresponding eigenspace consists only of classical forms. For the group $\SL_2$ our theorem shows that non-classical forms occur already in the eigenspace.
	\item One cannot hope for an analogue of the above theorem at the other refinement: The point $y$ is of non-critical slope and therefore $\cM^{cl}_{\overline{y}}=\cM_{\overline{y}}$. 	
	\item In the proof of the theorem above we used that there is a Zariski-dense set of stable points around the endoscopic point $x$. We want to briefly explain what happens for a point close to $x$ which comes from an endoscopic packet $\Pi$ with $p$ inert in the corresponding imaginary quadratic field. We also assume that we keep (2) to (4) in Hypothesis ($\star$) and that $\Pi_p$ is unramified. Now as $p$ is inert, the $L$-packet $\Pi_p$ is of size two and consists of the constituents of $\Ind_B^{\SL_2(\Q_p)}(\chi)$ for a quadratic character $\chi$.
Then for any choice of $\pi_f^p$ we have four elements in $\Pi$ that agree with $\pi_f^p$ at all places different from $p$ and $\infty$, and two of them are automorphic. Both automorphic representations give rise to the same point $x_{\mathrm{inert}}$ on the eigenvariety and it is non-critical. Note also that $\chi =\chi^{-1}$ and so from the proof of Lemma \ref{jacdim} we can directly see that the dimension of the classical subspace $(\cM'_{x_{\mathrm{inert}}})^{cl}$ is at least $n 2^{\#S+1}$.  
\end{enumerate}
\end{rem}

Let $f \in \cM_{\xbar}$ be a non-classical form as provided by the theorem. It would be interesting to understand the relationship between $f$ and the packet $\Pi(\theta)$ in a more concrete way.

Furthermore, using the canonical lifting $\varphi: J_B(\til{H}^1(K^p,W)_{la}) \rightarrow \til{H}^1(K^p,W)_{la}$ constructed in Section (3.4.8) of \cite{emerton-jacquet1} we get a vector $\varphi(f)$ in the $\SL_2(\Q_p)$-representation $\til{H}^1(K^p,W)_{la}$. The representation $\langle \SL_2(\Q_p)\varphi(f) \rangle$ generated by this vector is of interest in the context of the $p$-adic local Langlands programme for $\SL_2(\Q_p)$. It would be interesting to describe this representation explicitly.

\bibliography{sl2}
\bibliographystyle{plain}

\end{document}